\newcommand{\epsadv}{\varepsilon_{\textit{adv}}}
\newcommand{\Rb}{\mathbbm{R}}      
\newcommand{\Ac}{\mathcal{A}}
\newcommand{\Bc}{\mathcal{B}}
\newcommand{\Eb}{\mathbbm{E}}
\newcommand{\Fc}{\mathcal{F}}
\newcommand{\Lc}{\mathcal{L}}
\newcommand{\1}{\mathbbm{1}}
\newcommand{\argmax}{\mathop{\rm argmax}}
\newcommand{\argmin}{\mathop{\rm argmin}}
\newcommand{\prox}{\text{\rm prox}}
\newtheorem{assumption}{Assumption}
\newcommand\numberthis{\addtocounter{equation}{1}\tag{\theequation}}
\newcommand{\mg}[1]{{\color{blue} #1}}
\newcommand{\mgtwo}[1]{{\color{cyan} #1}}
\newcommand{\lz}[1]{{\color{red} #1}}
\renewcommand{\mg}[1]{{\color{black} #1}}
\renewcommand{\mgtwo}[1]{{\color{black} #1}}
\renewcommand{\lz}[1]{{\color{black} #1}}
\newenvironment{tightitemize}{%
    \list{{\textup{$\bullet$}}}{\settowidth\labelwidth{{\textup{\qquad}}}
    \leftmargin\labelwidth \advance\leftmargin\labelsep
    \parsep 0pt plus 1pt minus 1pt \topsep 3pt \itemsep 3pt
    }}{\endlist}
\newenvironment{tightlist}[1]{%
    \list{{\textup{(\roman{enumi})}}}{\settowidth\labelwidth{{\textup{(#1)}}}
    \leftmargin 0pt \advance\leftmargin\labelsep \itemindent \parindent
    \parsep 0pt plus 1pt minus 1pt \topsep 0pt \itemsep 0pt
    \usecounter{enumi}}}{\endlist}
       \definecolor{plum}{rgb}{0.3,0,0.7}
\begin{document}

\title{Distributionally Robust Learning with Weakly Convex Losses: Convergence Rates and Finite-Sample Guarantees
}

\author{\name Landi Zhu \email lz401@scarletmail.rutgers.edu
\AND
\name Mert G\"{u}rb\"{u}zbalaban \email mg1366@rutgers.edu 
\AND
\name Andrzej Ruszczy{\'n}ski \email rusz@rutgers.edu\\
\addr Department of Management Science and Information Systems\\ Rutgers Business School, Piscataway, USA. 
      }

\editor{TBD}

\maketitle

\begin{abstract}
 We consider a distributionally robust stochastic optimization problem and formulate it as a stochastic two-level composition optimization problem with the use of the mean--semideviation risk measure. In this setting, we consider  a single time-scale algorithm, involving
two versions of the inner function value tracking: linearized tracking of a continuously differentiable loss function, and SPIDER tracking of a weakly convex \mg{loss} function.
We adopt the norm of the gradient of the Moreau envelope as our measure of stationarity and show that the sample complexity of $\mathcal{O}(\varepsilon^{-3})$ is possible in both cases,  with only the constant larger in the second case.
Finally, we demonstrate the performance of our algorithm with a robust learning example and a weakly convex, non-smooth regression example.
\end{abstract}

\section{Introduction}\label{sec-intro}
\mg{We consider distributionally robust learning problems of the form
\begin{equation}
\min_{x\in X} \max_{\mathbb{Q}\in \mathcal{M}(\mathbb{P})} \mathbb{E}_{D\sim \mathbb{Q}} \left[\ell (x,D)\right],
\label{pbm-robust-stoc-opt}
\end{equation}
where $\ell:\Rb^n\times \Rb^d\to \Rb$ is the loss function of the predictor $x$ on the random data $D$ with a perturbed distribution with probability law $\mathbb{Q}$, $\mathcal{M}(\mathbb{P})$ is a closed convex set of probability measures (the \emph{ambiguity set}) that models perturbations to the reference law $\mathbb{P}$, and $X\subset\mathbb{R}^n$ is the feasible set. Such formulations allow training predictive models from data that are robust to perturbations in the input data distribution $\mathbb{P}$, by considering the worst case of the input distribution varying in the set $\mathcal{M}(\mathbb{P})$. Such a worst-case approach to stochastic optimization; recently,
it has also become relevant to machine learning applications. Such applications include but are not limited to convex and non-convex formulations of logistic regression, deep learning, and more generally supervised learning of predictive models in a data-robust fashion with risk minimization \citep{GRZ-JOTA,zhang2021robust,zhang2022sapd+,laguel2022superquantile,mehrotra,kuhn2019wasserstein}. The challenges in these applications are that the model dimension $n$ and the number of data points may be large, and the loss functions may be both non-smooth and non-convex.}

\mg{The ambiguity set $\mathcal{M}(\mathbb{P})$ models the uncertainty about the baseline data distribution $\mathbb{P}$, and its choice and the structure of the loss function affect the computational tractability of the resulting formulations and the design of optimization algorithms for solving \eqref{pbm-robust-stoc-opt}. Various possible choices of $\mathcal{M}(\mathbb{P})$ include the Wasserstein balls around $\mathbb{P}$ \citep{robust-log-reg,esfahani2018data,mehrotra,kuhn2019wasserstein,gao2017wasserstein,sinha2018certifying}, the $f$-divergence-based uncertainty sets \citep{bagnell2005robust,duchi-namkoong,namkoong2016stochastic,zhang2021robust}, and approaches based on risk measures such as the conditional value at risk \citep{robust-cvar,laguel2022superquantile} and the mean--semideviation risk \citep{GRZ-JOTA}. In this context, the quality of a first-order optimization algorithm can be assessed in terms of its convergence rate guarantees and sample complexity, i.e., the number of data points to be sampled for finding an approximate first-order stationary solution.}

We start with introducing the mean--semideviation based modeling of the uncertainty set $\mathcal{M}(\mathbb{P})$ before summarizing our contributions. 
For each model parameter $x\in X$, we consider the random loss $Z_x = \ell(x,D)$, defined on a sample space $\Omega$ equipped with a $\sigma$-algebra $\mathcal{F}$. We assume that the expectation $\mathbb{E}(Z_x) = \int_\Omega Z_x(\omega)\;\mathbb{P}(d\omega)$ is finite; i.e $Z_x \in \mathcal{L}_1(\Omega,\mathcal{F},\mathbb{P})$. We can evaluate its quality by the mean--semideviation risk measure defined as
\begin{equation}
\label{msd}
\rho[Z_x] =  \Eb[Z_x]  + \varkappa\, \Eb\Big[\max\big(0,Z_x - \Eb[Z_x]\big)\Big], \quad \varkappa \in [0,1],
\end{equation}
which penalizes the expected value of the random losses for large values of losses that exceed the expected value. This risk measure is a coherent risk measure enjoying several desirable properties and is used in many contexts in statistics and stochastic optimization \citep{rockafellar2006generalized,GRZ-JOTA}. It is well-known that the mean--semideviation risk $\rho[Z_x]$ of the random variable $Z_x$, admits the following dual representation \citep{RuSh:2006a}:
\begin{eqnarray}
\rho[Z_x] &=& \max_{\mu \in \Ac} \int_\varOmega Z_x(\omega)\mu(\omega)\;\mathbb{P}(d\omega) \nonumber\\
&=&
{\max_{\mathbb{Q}~:~ \frac{d\mathbb{Q}}{d\mathbb{P}}\in \Ac} \int_\varOmega Z_x(\omega)\;\mathbb{Q}(d\omega)}=
\max_{{\mathbb{Q} ~:~ \frac{d\mathbb{Q}}{d\mathbb{P}}\in \Ac}}\Eb_\mathbb{{Q}}[Z_x],\label{eq-dual-representation}
\end{eqnarray}
where $$\Ac = \big\{ \mu= \1+ \xi - \Eb[\xi]: \ \xi\in \Lc_{\infty}(\varOmega,\Fc,\mathbb{P}), \  \|\xi\|_\infty \le \varkappa,\ \xi \ge 0 \big\}$$ is a convex and closed set. Thus, from \eqref{msd} and \eqref{eq-dual-representation}, the min--max form
\eqref{pbm-robust-stoc-opt} with the ambiguity set
\begin{equation}
 \mathcal{M}(\mathbb{P}) = \Big\{ \mathbb{Q} : \frac{d\mathbb{Q}}{d\mathbb{P}} \in \mathcal{A}\Big\}
\label{eq-Mp}
\end{equation}
is equivalent to 
\begin{equation}
\min_{x\in X} F(x) = \min_{x\in X}  \rho[\ell(x,D)]  = 
\min_{x\in X}\; f(x,h(x)) \label{main_prob}
\end{equation}
with the functions
\begin{align}
f(x,u) &= \Eb\Big[u +  \varkappa\, \max\big(0,\ell(x,D) - u\big)\Big],\label{outer}\\
h(x) &= \Eb[\ell(x,D)].\nonumber 
\end{align}
In this way, using the mean-semideviation risk measure, one can convert the min-max problem into a two-level stochastic optimization problem which achieves an \emph{implicit} robust formulation, with the level of robustness controlled by the parameter $\varkappa$: for $\varkappa=0$ the uncertainty set \eqref{eq-Mp} contains
only the original probability measure $\mathbb{P}$, while for $\varkappa>0$ the measures $\mathbb{Q}\in \mathcal{M}(\mathbb{P})$ are distortions of $\mathbb{P}$. The range of the relative distortions allowed, $\frac{d\mathbb{Q}}{d\mathbb{P}} - \1$,
is controlled by $\varkappa$. 
The challenge is that this formulation is non-smooth, and typically non-convex when the loss is non-convex. Furthermore,
the use of the expected value inside the nonlinear function $f(x,\cdot)$ results in a bias of stochastic subgradient estimates.

\textbf{Contributions.} In this paper, we develop an optimization algorithm called \emph{stochastic compositional subgradient} (SCS) for solving \eqref{pbm-robust-stoc-opt} based on the reformulation \eqref{main_prob}, and establish finite-time convergence analysis and sample complexity results. Our algorithm is a variation and a simplified version of the single-time scale method proposed in \cite{GRZ-JOTA}, where the subgradients are no longer averaged and subgradient tracking is no longer needed. In  sections \ref{s:2} and \ref{s:3}, we assume a continuously differentiable loss function, and build on a projected subgradient descent framework, use linearized tracking, and employ the gradient of the Moreau envelope as our convergence metric. We prove  that the SCS method has a sample complexity of order $\mathcal{O}(\varepsilon^{-3})$ in this case.
In section  \ref{s:4} we assume a $\delta$-weakly convex loss function (see definition \eqref{weak_conv})  and use the SPIDER estimator \mgtwo{from \cite{fang2018spider}} to estimate the expectation of the losses. It is worth stressing that both $f(\cdot,\cdot)$ and $h(\cdot)$ in \eqref{main_prob} are nonsmooth and $h(\cdot)$ is nonconvex in this case. We prove that the SCS method has the same sample complexity of order $\mathcal{O}(\varepsilon^{-3})$, with a larger constant, though.

\paragraph{Related Work}
\mg{The convexity and non-smoothness structure of the (two-level) stochastic composite optimization problem \eqref{main_prob} is determined by the choice of the loss. When the loss is convex and possibly non-smooth, the composite objective $F(x)$ will also be convex and non-smooth in $x$. In this case, multi-level convex stochastic optimization algorithms such as \cite{wang2017stochastic} will be applicable, implying a sample complexity of $\mathcal{O}(\varepsilon^{-4})$ when the loss is convex and $\mathcal{O}(\varepsilon^{-1.5})$ when the loss is strongly convex. }
\lz{
When the loss is non-convex, irrespective of whether it is smooth or not, the composite objective $F(x)$ will be non-convex and non-smooth in $x$. The convergence rate for this general setting is not available, but if we only consider a smooth problem, there exist some complexity results. 
} 
In \cite{wang2017stochastic}, the authors analyzed stochastic gradient algorithms with different assumptions on the objective, and prove sample complexities $\mathcal{O}(\varepsilon^{-3.5})$ and $\mathcal{O}(\varepsilon^{-1.25})$ for smooth convex problems and smooth strongly convex problems, respectively. These rates can be further improved with proper regularization \citep{wang2016accelerating}. In \cite{ghadimi2020single}, the authors propose a single time-scale Nested Averaged Stochastic Approximation (NASA) method for smooth nonconvex \mgtwo{composition optimization problems} and  prove the sample complexity of $\mathcal{O}(\varepsilon^{-2})$. For higher-level (more than two) problems, \cite{ruszczynski2021stochastic} establishes asymptotic convergence of a stochastic subgradient method by analyzing a system of differential inclusions, along with a sample complexity of $\mathcal{O}(\varepsilon^{-2})$ when smoothness is assumed. Another level-independent rate of $\mathcal{O}(\varepsilon^{-2})$ is obtained for smooth multi-level problems in \cite{balasubramanian2022stochastic} without the boundedness assumption. 

There are also approaches dealing with \eqref{pbm-robust-stoc-opt} not based on composite stochastic optimization. In particular,  \cite{ho2022adversarial} considered linear classification problems subject to Wasserstein ambiguity sets for the ``zero-one loss" which is non-convex and non-smooth. The authors showed that this problem is equivalent to minimizing a regularized ramp loss objective and proposed a class of
smooth approximations to the ramp loss, where smooth problems can be solved (approximately) with standard continuous optimization algorithms. There are also other approaches which can provide complexity results when the loss is either smooth or convex.
\lz{
\paragraph{Smooth losses}
\cite{sinha2018certifying} formulate $\mathcal{M}(\mathbb{P})$ as a $\rho$-neighborhood of the probability law $\mathbb{P}$ under the Wasserstein metric. They show that for a smooth loss and small enough robustness level $\rho$, the stochastic gradient descent (SGD) method can achieve the same rate of convergence as that in the standard smooth non-convex optimization. In \cite{jin2021non}, the authors
consider smooth and Lipschitz non-convex losses  
and use
a soft penalty term based on $f$-divergence to model the distribution shifts. They analyzed the mini-batch
normalized SGD with momentum and proved an $\mathcal{O}(\varepsilon^{-4})$ sample complexity \mgtwo{(for the norm of the gradient of the loss to be at most $\varepsilon$) which also matches the rates that can be obtained in standard smooth non-convex optimization. For a smoothed version of the CVaR, the authors obtain similar convergence guarantees for smooth non-convex losses that are Lipschitz.}}
\mgtwo{In \cite{soma2020statistical}, the authors proposed a conditional value-at-risk (CVaR) formulation. They show that for convex, Lipschitz and smooth losses their SGD-based algorithm has a complexity of $\mathcal{O}(1/\varepsilon^2)$, whereas for non-convex, smooth and Lipschitz losses, the authors obtain a complexity of $\mathcal{O}(1/\varepsilon^6)$.
In \cite{curi2020adaptive},
the authors proposed an adaptive sampling algorithm for  stochastically optimizing the CVaR of the empirical distribution of the loss, and reformulated this optimization problem as a two-player game based on the dual representation of CVaR. For convex problems, they obtain a regret bound of $\mathcal{O}(T)$ over $T$ iterations, and for non-convex problems, they obtain a regret bound of $\mathcal{O}(T)$ assuming access to an inexact empirical risk minimization (ERM) oracle. However, implementing this oracle for non-convex problems requires solving a weighted empirical loss minimization in every iteration and this is NP-hard in general \cite[Sec. 4.2]{curi2020adaptive}. 

\mgtwo{When the loss is smooth and Lipschitz continuous on the primal space $X$, sample-based approximations of \eqref{pbm-robust-stoc-opt} where the expectation is approximated by a finite average taken over the data points results 
in smooth non-convex/merely concave min-max optimization problems where the dual space $\mathcal{M}(P)$ is finite-dimensional when determined by $f$-divergences \citep{zhang2022sapd+}. In this case, primal-dual algorithms are applicable and the SAPD+ algorithm from \citep{zhang2022sapd+} provides an $\mathcal{O}(1/\varepsilon^6)$ complexity.} There are also primal approaches. In particular, \cite{tianbao-KL} propose a primal method for solving a class of distributionally robust optimization (DRO) problems with smooth non-convex objectives in the sample-based approximation form. They consider a KL divergence regularization on the dual variable and convert the DRO problem into a smooth two-level compositional problem. For smooth losses, the authors show an $\tilde{\mathcal{O}}(1/\varepsilon^{3/2})$ complexity for computing an approximate solution $w_\varepsilon$, i.e., a solution satisfying $\|\nabla F_p(w_\varepsilon)\|^2\leq \varepsilon$ where $F_p$ is the primal function. Under the additional PL condition, the authors also derive an improved complexity of  $\mathcal{O}(\frac{1}{\varepsilon})$ for smooth losses when the suboptimality of the primal function is the performance metric. More recently, \cite{qi2022stochastic} considered the KL-constrained DRO for smooth losses and proposed scalable compositional algorithms that can work with a constant batch size at every iteration. Applications of sample-based DRO formulations to various problems in machine learning, such as handling imbalanced data \citep{qi2022attentionalbiased} and partial AUC maximization \citep{zhu2022auc}, have also been studied. \mgtwo{Our formulation does not require sample-based approximations and can handle the general case when $\mathcal{M}(\mathbb{P})$ may be infinite-dimensional}; furthermore, we obtain complexity results for weakly convex losses that are not only non-convex but at the same time that can be non-smooth. \looseness=-1
\paragraph{Convex losses}
If formulated as finite-dimensional convex programs \citep{robust-log-reg,esfahani2018data,mehrotra,kuhn2019wasserstein}, the distributionally robust problem \eqref{pbm-robust-stoc-opt} can be solved in polynomial time. When $\mathcal{M}(\mathbb{P})$ is defined via the $f$-divergences and the loss is convex and smooth, a sample-based approximation of \eqref{pbm-robust-stoc-opt} can be solved with a bandit mirror descent algorithm \citep{namkoong2016stochastic} with the number of iterations comparable to that of the SGD. For convex losses in the same formulation, conic interior point solvers or gradient descent with backtracking Armijo line-searches \citep{duchi-namkoong} can also be used but this can be computationally expensive for some applications when the dimension or the number of samples is large. When the uncertainty set $\mathcal{M}(\mathbb{P})$ is based on the empirical distribution of the data and is defined via the $\chi^2$-divergence or CVaR, and the loss is convex and Lipschitz, \cite{levy2020large} proposed algorithms that achieve an optimal $\mathcal{O}(\varepsilon^{-2})$ rate which is independent of the training dataset size and the number of parameters. 

When $\ell(\cdot,D)$ is non-convex and non-differentiable, distributionally robust stochastic optimization problems lead to non-convex non-smooth min-max optimization problems. To our knowledge, in this general case, none of the existing algorithms admit provable convergence guarantees to a stationary point of \eqref{pbm-robust-stoc-opt} and do not admit iteration complexity bounds. \mgtwo{Our results apply to this setting and provide iteration complexity estimates for weakly convex losses that may be non-smooth.}
}

\textbf{Notation and Preliminaries.} \mg{A function $q: \mathbb{R}^n \to \mathbb{R}$ is called $\delta$-weakly convex, if the regularized function $x\mapsto q(x)+\frac{\delta}{2}\|x\|^2$ is  convex  \citep{nurminskii1973quasigradient}}. This is a broad class of functions that can be non-smooth and non-convex, including all convex functions and smooth functions with a globally Lipschitz continuous gradient.
A $\delta$-weakly convex function $q(x)$ has also the following property: at every point $x\in \Rb^n$ a vector $g\in \Rb^n$ exists such that
\begin{equation}
\mg{q(y) \ge q(x) + \langle g,y-x\rangle -\frac{\delta}{2}\|y-x\|^2,\qquad \forall y \in \Rb^n,}
\label{weak_conv} 
\end{equation}
\mg{(see e.g. \cite{davis2018subgradient})}. The set $\partial q(x)$ of vectors $g$ satisfying the above relation is the \emph{subdifferential} of $q(\cdot)$ at $x$; it is nonempty, convex, and closed. In fact, it coincides with the Clarke subdifferential for this class of functions \citep{Rockafellar-Wets}. \mg{We say that a continuously differentiable function $q:\mathbb{R}^n\to\mathbb{R}$ is $L$-smooth on a convex set $X$, if $\nabla q(x)$ is Lipschitz continuous on $X$, i.e. $\|\nabla q(x) - \nabla q(y)\| \leq L\|x-y\|$ for all $x,y\in X$.} 
\section{A Stochastic Compositional Subgradient (SCS) Method}
\label{s:2}

\mg{We first consider the case when the loss function is continuously differentiable, the non-smooth case will be addressed later in Section {\color{plum} \ref{s:4}}. } 

\begin{assumption}
The set $X\subset \Rb^n$ is convex and compact.
\label{as1}
\end{assumption}

\begin{assumption}
 For all $x$ in a neighborhood of the set $X$:
\begin{tightlist}{(ii)}
\item The function $\ell(x,\cdot)$ is integrable;
\item The function $\ell(\cdot,D)$ is continuously differentiable
and integrable constants $\tilde{\Delta}_h(D)$ and $\Tilde{\delta}(D)$ exist
such that 
\[
\| \nabla \ell(x,D)\| \le \tilde{\Delta}_h(D), \quad \forall\, D\in \mgtwo{\Rb^d},
\]
and 
\[
\| \nabla \ell(x,D)- \nabla \ell(y,D)\|\le \Tilde{\delta}(D)\|x - y\|, \quad \forall\, 
\mgtwo{x,y}\in X,\quad \forall\,D\in \mgtwo{\Rb^d}.
\]
\end{tightlist}

\label{as2}
\end{assumption}
\begin{remark}
\label{remark-weakly-convex}
\lz{Since the loss function $\ell(x, D)$ is $\Tilde{\delta}(D)$-smooth and the feasible set $X$ is compact, $\ell(x, D)$ is also
$\Tilde{\delta}(D)$-weakly convex (\mgtwo{see the definition of the weak convexity in the last paragraph of Section \ref{sec-intro}}). \mg{Assumption \ref{as1} and \ref{as2} guarantees that the expected value function $h(\cdot)$ is well defined and $\delta$-smooth, \lz{$\delta$-weakly convex} on $X$, with $\delta = \Eb[\Tilde{\delta}(D)]$.} } 
\end{remark}

\mg{Assumptions \ref{as1} and \ref{as2} are satisfied for many problems in statistical learning including non-convex constrained formulations of various classification and regression tasks such as deep learning, least squares and logistic regression \citep{GRZ-JOTA,pmlr-v119-negiar20a}.} 

\mg{If a subgradient of $f(\cdot)$ and the gradient of $h(\cdot)$ were known, a subgradient of the composite function $F(x) = f(x,h(x))$ could be calculated by an application of the chain rule \citep{Rockafellar-Wets}, i.e. if $\begin{bmatrix} g_{fx}\\ g_{fu }\end{bmatrix} \in \partial f(x,u)$ and $g_h=\nabla h(x)$, then we would have
 \begin{equation} g_{fx} + g_{fu} g_h \in \partial F(x).
 \label{eqn-chain-rule}
 \end{equation}}
\mg{Unfortunately, in our setting, we neither have access to the subgradients in \eqref{eqn-chain-rule} nor to the value of $h(x)$; we can only obtain their stochastic estimates.}
\mg{To address this, our proposed method, stochastic compositional subgradient (SCS) generates approximate solutions $\big\{x^k\big\}_{k=1,2,\dots}$ in $\Rb^n$ based on a projected stochastic subgradient update rule that estimates the subgradient of the composite function $F(x)$ (by relying on the stochastic subgradients of $f$ and $h$) and projects the iterates back to the constraint set $X$ which ensures that the iterates stay bounded. Our method described in Algorithm \ref{alg1} also generates  
random inner function estimates $\big\{u^k\big\}_{k=1,2,\dots}$} in $\Rb$, 
\mg{where we assume access to unbiased stochastic estimates of the subgradients of $f$ and $h$ and values of $h$ with a bounded variance. More precisely, denoting by$\Fc_k$ the $\sigma$-algebra generated by $\{\mg{x^0, u^0}, x^1,u^1,\dots,x^k,u^k\}$ where \mg{$x^0 \in X$ and $u^0 \in \mathbb{R}$ are the initializations}, we make the following assumption.}
\begin{assumption}
 For all $k$, we have access to random vectors $\tilde{g}_f^k$, $\tilde{g}_h^k$, $\tilde{J}^k$, and random variables $\tilde{h}^k$ satisfying the conditions:
\begin{tightitemize}
\item
$\tilde{g}_f^k = g_f^k+e_f^k,\quad g_f^k\in \partial f(x^k,u^k),\quad \Eb\big[e_f^k\big|\Fc_k\big] =0,\quad \Eb\big[\|e_f^k\|^2\big|\Fc_k\big] \le \sigma^2$;
\item
$\tilde{g}_h^k = g_h^k+e_h^k,\quad g_h^k= \nabla h(x^k),\quad \Eb\big[e_h^k\big|\Fc_k\big] =0,\quad \Eb\big[\|e_h^k\|^2\big|\Fc_k\big] \le \sigma^2$;
\item
$\tilde{h}^k = h(x^k)+e_\ell^k,\quad \Eb\big[e_\ell^k\big|\Fc_k\big] =0,\quad \Eb\big[\|e_\ell^k\|^2\big|\Fc_k\big] \le \sigma^2$;
\item $\tilde{J}^{\,k} = g_h^{k} + E^{k}, \quad
\Eb\big\{E^{k}\big|\Fc_k\big\} = 0,\quad \Eb\big\{\|E^{k}\|^2|\Fc_k\big\}\le \sigma^2;$
\end{tightitemize}
 where $\sigma$ is a constant,
and the errors $e_f^k$, $e_h^k$, $e_\ell^k$, and $E^{k}$ are conditionally independent, given $\Fc_k$.
\label{as3}
\end{assumption}
\lz{
\begin{remark}
Under Assumptions \ref{as1} and \ref{as2}, all the values and subgradients of $f$ and $h$:  $g^k_f$, $g^k_h$ and $h(x^k)$, are bounded. We denote for all $x\in X$, $u\in \mathbb{R}$, 
\[
\|\partial_x f(x,u)\|\le \Delta_{fx},\qquad \|\nabla h(x)\|\le \Delta_{h},
\]
with $\Delta_{h} = \Eb[ \tilde{\Delta}_h(D)]$.
Under Assumption \ref{as3},  the following stochastic estimate of an element of $\partial F(x^k)$ resulting
from replacing the true (sub)gradients in \eqref{eqn-chain-rule} with their random estimates has a bounded expected square norm:
\begin{equation}
    \Eb\big[\|\tilde{g}_{fx}^k + \tilde{g}_{fu}^k \tilde{g}_h^k\|^2 \,\big|\,\Fc_k\big] \le M^2 \quad \mbox{with} \quad M^2 = (\Delta_{fx}+\Delta_{h})^2+2\sigma^2+2\sigma\Delta_h, 
    \label{gF_bound}
\end{equation}
\mgtwo{where we used conditional independence of the random errors, Cauchy-Schwarz inequality and the fact that $\tilde{g}_{fu}^k \in [0,1]$ implied by \eqref{def_gu} }.
\end{remark}
}
\mg{A common setting in statistical learning and stochastic optimization is to estimate the subgradients based on randomly sampled subsets of data points with replacement \citep{bottou2010large}. In this setting, when the domain $X$ is unbounded, it is possible that the variance of such stochastic subgradient estimator can be unbounded \citep{gurbuzbalaban2021heavy,jain2018accelerating,decentralized-heavy}. However, in our setting, the feasible set $X$ is compact, therefore Assumption \ref{as3} will be naturally satisfied (see Section \ref{subsec-assump-hold})}. 

\begin{algorithm}
\caption{\mgtwo{SCS} method}\label{alg1}

\begin{algorithmic}[1]
\small
\Require initial point $x^0 \in X$, $u^0 \in \Rb$, a constant stepsize $\tau \in \big(0,1\big]$.
\For{$k=0,1,...,N-1$}
\State 
Sample $D_1^{k+1}$, $D_2^{k+1}$ and $D_3^{k+1}$ conditionally \mgtwo{independently} on $\mathcal{F}_k$
and obtain the estimates
\begin{align}
G^{k} &\in \partial_x \ell(x^{k}, D_1^{k+1}), \label{def_G}\\
\tilde{g}_{fx}^{k} &= \begin{cases} 0 & \text{if } \ell(x^{k}, D_1^{k+1}) < u^{k},\\
                                    \varkappa G^{k} & \text{if } \ell(x^{k}, D_1^{k+1}) \ge u^{k},
                                    \end{cases}\label{def_gx}\\
\tilde{g}_{fu}^{k} &=\begin{cases} 1 & \text{if } \ell(x^{k}, D_1^{k+1}) < u^{k},\\
                                   1-\varkappa & \text{if } \ell(x^{k}, D_1^{k+1}) \ge u^{k},
                                    \end{cases}\label{def_gu}\\
\tilde{g}_{h}^{k} &\in \partial_x \ell(x^k, D_2^{k+1}),\\
\tilde{J}^{k} &\in \partial_x \ell(x^k, D_3^{k+1}),\\
\tilde{h}^{k} &= \frac{1}{3}(\ell(x^k, D_1^{k+1})+\ell(x^k, D_2^{k+1})+\ell(x^k, D_3^{k+1})).\label{def_h}
\end{align}
\State Update the solution estimate
\begin{equation*}
x^{k+1} = \Pi_X\Big( x^k - \mgtwo{\tau} \big( \tilde{g}_{fx}^k + \tilde{g}_{fu}^k \tilde{g}_h^k\big)^T\Big),
\end{equation*}
\State Update the inner function estimate
\begin{equation}
u^{k+1} = u^k + \mgtwo{\tau} \big( \tilde{h}^k - u^k)+ \tilde{J}^k\big(x^{k+1}-x^k\big).\label{update_u}
\end{equation}
\EndFor
\Ensure $x^R$ with $R$ uniformly sampled from $\{0,1,\dots,N-1\}$.
\end{algorithmic}
\end{algorithm}
\section{Convergence rate for continuously differentiable losses}
\label{s:3}

Because of the formulation of the mean-semideviation risk measure involving a non-smooth $\max(\cdot)$ term, the outer function \eqref{outer} may be nonsmooth even when the loss function  $\ell(x,D)$ is continuously differentiable.
Therefore, \mg{a challenge is that the  problem \eqref{main_prob} is non-smooth and non-convex}. \mg{However, we will argue that it is weakly convex.
For weakly convex objectives, a popular metric for determining first-order stationarity is the norm of the gradient of the Moreau envelope \citep{Moreau1965ProximitED} which we will introduce next.} We first 
consider an alternative formulation of the main problem (\ref{main_prob}):
\begin{equation*}
\min_{x\in \mathbbm{R}^n}\; \varphi(x) := F(x) + r(x),
\end{equation*}
where $F(x) = f(x,h(x))$ and $r(x)$ is the indicator function of the convex and compact feasible set  $X\subset \Rb^n$, i.e. $r(x) = 0$ if $x\in X$ and $r(x)=+\infty$ otherwise.
The Moreau envelope and the proximal map are defined as 
\begin{align*}
    \varphi_\lambda(x) &:= \min_{y} \{ \varphi(y) + \frac{1}{2\lambda}\|y-x\|^2 \},\\
    \prox_{\lambda\varphi}(x) &:= \argmin_y \{ \varphi(y) + \frac{1}{2\lambda}\|y-x\|^2 \},
\end{align*}
respectively. 
\lz{Since the inner function $h(x)$ is $\delta$-weakly convex 
 and the outer function $f(x,u)$ is weakly convex with respect to $x$ and convex and nondecreasing with respect to $u$ (see Remark \ref{remark-weakly-convex})}, the composite function $F(x)$ is also $\rho$-weakly convex with $\rho = (1+2\varkappa)\delta$. In this case $\varphi_\lambda(x)$ is \mg{continuously differentiable} for $\lambda\in (0, \rho^{-1})$ \citep{Moreau1965ProximitED} \mg{with the gradient}
\begin{equation}
    \nabla \varphi_\lambda(x) = \lambda^{-1}(x-\prox_{\lambda\varphi}(x))\label{me_grad}.
\end{equation}
It can also be shown that the quantity $\|\nabla \varphi_\lambda(x)\|$ is a measure of stationarity, i.e. when $\|\nabla \varphi_\lambda(x)\|$ is small, $x$ will be close to some \textit{nearly stationary point} $\hat{x}$, which in turn, has the subdifferential close to 0 \citep{davis2019stochastic}, \mg{i.e. $\hat{x}$ satisfies the following relations:} 
\lz{
\[
\left\{
\begin{aligned}
  \|\hat{x}-x\| &= \lambda \|\nabla \varphi_\lambda(x)\|,\\
  \varphi(\hat{x})&\le\varphi(x),\\
  \mbox{dist}(0;\partial\varphi(\hat{x}))&\le \|\nabla \varphi_\lambda(x)\|.
\end{aligned} 
\right.
\]
}
\mg{Here, $\mbox{dist}(0;\partial\varphi(\hat{x}))$ denotes the distance of the origin to the set $\partial\varphi(\hat{x})$}. \mg{Therefore, the convergence guarantees for the gradient of the Moreau envelope in this paper, can be converted to guarantees in terms of the subdifferential.}

Now we can proceed to prove the convergence rate of the SCS method for a continuously differentiable loss function. \mg{First, we quantify how well the inner  function estimates $\{u^k\}$ track the  sequence $\{h(x^k)\}$.}
\begin{lemma}
\label{smooth_error}
If Assumptions \ref{as1}, \ref{as2}, and \ref{as3} hold, the sequence $\{u^k\}$ generated by Algorithmn \ref{alg1} satisfies:
\begin{align}
\Eb\big[ |u^{k} - h(x^{k})| \big] \le  \sigma(1+M)\tau^{1/2} + {\delta} M \tau &\lz{+(1-\tau)^{k}|u^0-h(x^0)|},\nonumber \\ &\qquad k = 0,1,...,N-1.
\label{tracking_err}
\end{align}
\end{lemma}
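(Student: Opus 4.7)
}
The plan is to derive a one-step recursion for the tracking error $e^k := u^k - h(x^k)$ and then unroll it. Substituting the update \eqref{update_u} and introducing the linearization remainder
\[
r^k := h(x^{k+1}) - h(x^k) - \nabla h(x^k)(x^{k+1}-x^k),
\]
which, by the $\delta$-smoothness of $h$ (Remark \ref{remark-weakly-convex}), satisfies $|r^k| \le \tfrac{\delta}{2}\|x^{k+1}-x^k\|^2$, I would write
\[
e^{k+1} = (1-\tau)e^k + \tau e_\ell^k + E^k(x^{k+1}-x^k) - r^k,
\]
where $e_\ell^k = \tilde{h}^k - h(x^k)$ and $E^k = \tilde{J}^k - \nabla h(x^k)$ are the zero-mean noise terms furnished by Assumption \ref{as3}.

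Next, I would square both sides and pass to conditional expectation given $\mathcal{F}_k$, exploiting three structural facts. First, $\mathbb{E}[e_\ell^k|\mathcal{F}_k]=\mathbb{E}[E^k|\mathcal{F}_k]=0$. Second, $\tilde J^k$ is built from $D_3^{k+1}$ while $x^{k+1}$ depends only on $D_1^{k+1},D_2^{k+1}$; since these samples are drawn conditionally independently, $E^k$ is conditionally independent of $x^{k+1}-x^k$, so the mixed term $\mathbb{E}[E^k(x^{k+1}-x^k)|\mathcal{F}_k]$ vanishes and $\mathbb{E}[\|E^k(x^{k+1}-x^k)\|^2|\mathcal{F}_k]$ factorizes. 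Third, the non-expansiveness of $\Pi_X$ gives $\|x^{k+1}-x^k\| \le \tau\|\tilde{g}_{fx}^k + \tilde{g}_{fu}^k \tilde{g}_h^k\|$, whose conditional second moment is bounded by $\tau^2 M^2$ from \eqref{gF_bound}. Combining these estimates, the only surviving mixed term is $-2(1-\tau)e^k\mathbb{E}[r^k|\mathcal{F}_k]$, which is of order $\tau^2|e^k|$ because $|\mathbb{E}[r^k|\mathcal{F}_k]| \le \tfrac{\delta}{2}\tau^2 M^2$, while the remaining second-moment contributions are of order $\tau^2(\sigma^2(1+M^2) + \delta^2 M^2)$ (the $\delta^2$ piece from $\mathbb{E}[(r^k)^2]$ is controlled by using the compactness of $X$ to convert one power of $\|x^{k+1}-x^k\|^2$ into a bounded diameter factor).

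Absorbing the linear-in-$|e^k|$ cross term into the square term via boundedness of $|e^k|$ (itself a consequence of the compactness of $X$ and the stability of $u^k$), I expect a recursion of the form
\[
\mathbb{E}[(e^{k+1})^2 | \mathcal{F}_k] \le (1-\tau)^2 (e^k)^2 + C\tau^2,
\]
with $C \asymp \bigl(\sigma(1+M) + \delta M\bigr)^2$. Unrolling gives $\mathbb{E}[(e^k)^2] \le (1-\tau)^{2k}(e^0)^2 + C\tau/(2-\tau)$, and the claimed bound follows from Jensen's inequality $\mathbb{E}[|e^k|] \le \sqrt{\mathbb{E}[(e^k)^2]}$ together with $\sqrt{a+b}\le\sqrt{a}+\sqrt{b}$, which converts the $C\tau$ remainder into the $\sigma(1+M)\tau^{1/2} + \delta M \tau$ pattern and the $(1-\tau)^{2k}$ contraction into $(1-\tau)^k$.

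The main obstacle is handling the cross term $-2(1-\tau)e^k\mathbb{E}[r^k|\mathcal{F}_k]$ without sacrificing the $(1-\tau)^2$ contraction factor, since only this factor yields the $(1-\tau)^k$ decay of the initial error after taking square roots; a careless Young's inequality would degrade the contraction to $(1-\tau)$ and produce a weaker $(1-\tau)^{k/2}$ rate. The resolution is either to use the deterministic boundedness of $|e^k|$ (via compactness of $X$) or to apply Young's inequality with a parameter scaled so that the additional coefficient on $(e^k)^2$ is $O(\tau^3)$ and can be reabsorbed into $(1-\tau)^2$.
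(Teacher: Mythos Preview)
Your one-step recursion for $e^k = u^k - h(x^k)$ is correct and matches the paper. The divergence is in how to extract the bound from it. You propose to \emph{square first, then unroll}; the paper instead \emph{unrolls first, then bounds}. Concretely, the paper iterates the linear recursion to obtain the closed form
\[
e^{k+1} = (1-\tau)^{k+1}e^0 + \sum_{j=0}^{k}(1-\tau)^{k-j}\Big(\tau e_\ell^j + E^j(x^{j+1}-x^j) - r^j\Big),
\]
and then bounds $\Eb|e^{k+1}|$ by the triangle inequality as three separate sums. The two martingale sums are handled in $L^2$ (orthogonality of increments gives $\sum (1-\tau)^{2(k-j)}\sigma^2\tau^2 \le \sigma^2\tau$) and then pushed to $L^1$ by Jensen; the bias sum $\sum (1-\tau)^{k-j}|r^j|$ is bounded directly in $L^1$ by the triangle inequality, since each $|r^j|$ is $O(\tau^2)$ and the geometric weights sum to $O(\tau^{-1})$. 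This separation is exactly what produces the two-scale structure $\sigma(1+M)\tau^{1/2} + \delta M\tau$ in the statement, and the $(1-\tau)^k$ transient falls out automatically from the unrolled form.

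Your squaring route conflates the martingale and bias contributions, and the two fixes you propose for the cross term $-2(1-\tau)e^k r^k$ both have gaps. First, deterministic boundedness of $|e^k|$ is not available under Assumptions~\ref{as1}--\ref{as3}: only second moments of the noise terms are controlled, so $\tilde h^k$, $\tilde J^k$, and hence $u^k$, need not be almost surely bounded. Second, applying Young's inequality with parameter $\alpha=O(\tau^3)$ does keep the contraction at $(1-\tau)^2 + O(\tau^3)$, but the companion term becomes $O(\tau^4/\tau^3)=O(\tau)$ rather than $O(\tau^2)$; unrolling then gives a steady-state second moment of order $O(1)$, destroying the bound. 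Conversely, taking $\alpha=O(\tau)$ preserves the $O(\tau^2)$ forcing term but degrades the contraction to $(1-\tau)^2+O(\tau)$, and after the square root the transient becomes $(1-\tau+O(\tau))^{k/2}$, strictly slower than the claimed $(1-\tau)^k$. The squaring approach can be rescued by a direct induction on the ansatz $\Eb[(e^k)^2]\le\bigl((1-\tau)^k|e^0|+B\tau^{1/2}\bigr)^2$, but this is more delicate than what you wrote and in any case yields a $\delta M\tau^{1/2}$ term rather than the sharper $\delta M\tau$ of the lemma. The paper's unroll-then-split argument sidesteps all of this.
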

\begin{proof}
Under Assumptions \ref{as1} and \ref{as2}, the inner function $h(x)$ is $\delta$-smooth.  Therefore,
\[
h(x^{k+1}) = h(x^k) + \big[g_h^{k}\big]^T(x^{k+1}-x^k) + A_k, \quad  \| A_k \|\le {\delta}\| x^{k+1}-x^k\|^2.
\]
From the update rule \eqref{update_u} for $\{u^k\}$, we have 
\begin{equation*}
u^{k+1} = u^k + \tau \big( {h}(x^k)- u^k) + \tau e^k_\ell + \big[g_h^{k}\big]^T\big(x^{k+1}-x^k\big) + E^{k}(x^{k+1}-x^k) .
\end{equation*}
Thus,
\[
u^{k+1} - h(x^{k+1}) = (1-\tau)\big[ u^k - h(x^k)\big] + \tau e^k_\ell + E^{k}(x^{k+1}-x^k) - A_k.
\]
\mg{By using this equality recursively}, \lz{we obtain}
\begin{multline}
\label{dynsys}
u^{k+1} - h(x^{k+1}) =  \\\sum_{j=0}^k (1-\tau)^{k-j}\big( \tau e^j_\ell + E^{j}(x^{j+1}-x^j) - A_j\big)
\lz{+(1-\tau)^{k+1}(u^0-h(x^0))}.
\end{multline}
The norms of the martingale terms can be easily bounded:
\[
\Eb\bigg[ \Big(\sum_{j=0}^k (1-\tau)^{k-j}\tau e_\ell^j\Big)^2 \bigg] \le \sum_{j=0}^k (1-\tau)^{2(k-j)} \sigma^2\tau^2
\le \frac{\sigma^2\tau^2 }{1- (1-\tau)^2} \le \sigma^2\tau,
\]
and thus
\begin{equation}
\label{martingale1}
\Eb\bigg[ \Big|\sum_{j=0}^k (1-\tau)^{k-j}\tau e_\ell^j\Big| \bigg] \le \sigma\tau^{1/2}.
\end{equation}
Observe that by the conditional independence of $E^k$ and $x^{k+1}-x^k$, we have $\Eb\big[  E^{k}(x^{k+1}-x^k)\big|\Fc_k\big]=0$. Furthermore,
by the Cauchy--Schwartz inequality and the non-expansiveness of the projection operator
\[
\Eb \Big[ \big| E^{k}(x^{k+1}-x^k)\big|^2\,\Big| \,\Fc_k \Big]\le  \sigma^2 M^2 \tau^2,
\]
where $M$ is the constant from \eqref{gF_bound}.  Thus, similar to \eqref{martingale1}, we obtain
\begin{equation}
\label{martingale2}
\Eb\bigg[ \Big|\sum_{j=0}^k (1-\tau)^{k-j}E^{j}(x^{j+1}-x^j)\Big| \bigg] \le 
\sigma M \tau^{1/2}.
\end{equation}
The third sum can be bounded directly:
\begin{equation}
\label{sum3}
\Eb\bigg[ \Big|\sum_{j=0}^k (1-\tau)^{k-j}A_j\Big| \bigg] \le {\delta} M \tau^2 \Eb\bigg[ \Big|\sum_{j=0}^k (1-\tau)^{k-j}\Big| \bigg] \le \delta M \tau. 
\end{equation}
Plugging the estimates \eqref{martingale1}--\eqref{sum3} into \eqref{dynsys} we conclude that
\[
\Eb \big|u^{k+1} - h(x^{k+1}) \big| \le \sigma(1+M)\tau^{1/2} + {\delta} M \tau
\lz{+(1-\tau)^{k+1}|u^0-h(x^0)|},
\]
as required.
\end{proof}


We now consider the Moreau envelope $\varphi_{1/\bar{\rho}}(x)$ with $\bar{\rho} = \rho+(1+\varkappa)\delta$, 
\mg{and obtain a bound for the expected squared norm of its gradient at an iterate $x^R$ that is randomly chosen among the first $N$ iterates. Complexity results for unconstrained stochastic weakly convex minimization exist \citep{davis2018stochastic} but such results are not directly applicable to our setting, because we have a two-level non-smooth weakly convex problem in $x$. Our proof leverages the monotonicity of $f(x,u)$ with respect to $u$ to handle its non-smoothness while exploiting the weak convexity of the loss with respect to $x$.}

\begin{theorem}
\label{smooth_rate}
Suppose Assumptions \ref{as1} to \ref{as3} hold. \mg{For any given iteration budget $N$, consider the trajectory $\{x^k\}_{k=0}^{N-1}$ of Algorithm \ref{alg1}}. We have
\begin{equation*}
    \Eb[\|\nabla \varphi_{1/\bar{\rho}}(x^R)\|^2]
    \le 2\frac{\varphi_{1/\bar{\rho}}(x^{0})-\lz{\min_{x\in X} F(x)} \lz{ + 2\bar{\rho}|u^0-h(x^0)|}+NC_3\tau^{3/2}}{N\tau},
\end{equation*}
where \mg{$\bar{\rho} = \rho+(1+\varkappa)\delta$}, 
\lz{$C_3 = 2\bar{\rho}\sigma(1+M)$,} the expectation is taken with respect to the trajectory generated by Algorithm \ref{alg1} and the random variable $R$ that is uniformly sampled from 
$\{0,1,...,\mg{N}-1\}$ independently of the trajectory. 
\end{theorem}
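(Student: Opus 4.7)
The plan is to analyze one iteration of SCS through the Moreau envelope, in the spirit of \cite{davis2019stochastic}, while carefully handling the bias caused by estimating $h(x^k)$ by the tracking variable $u^k$. Let $\hat{x}^k = \prox_{(1/\bar{\rho})\varphi}(x^k)$; since $r$ is the indicator of $X$, we have $\hat{x}^k\in X$. Writing the composite stochastic direction used in the update as $v^k = \tilde{g}_{fx}^k + \tilde{g}_{fu}^k\tilde{g}_h^k$, combining the definition $\varphi_{1/\bar{\rho}}(x^{k+1})\le F(\hat{x}^k)+\tfrac{\bar{\rho}}{2}\|x^{k+1}-\hat{x}^k\|^2$ with non-expansiveness of $\Pi_X$ at $\hat{x}^k$ and with the identity $\varphi_{1/\bar{\rho}}(x^k)=F(\hat{x}^k)+\tfrac{\bar{\rho}}{2}\|x^k-\hat{x}^k\|^2$ yields
\begin{equation*}
\varphi_{1/\bar{\rho}}(x^{k+1}) \le \varphi_{1/\bar{\rho}}(x^k) + \bar{\rho}\tau\langle v^k,\hat{x}^k-x^k\rangle + \tfrac{\bar{\rho}\tau^2}{2}\|v^k\|^2.
\end{equation*}

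Taking conditional expectation given $\Fc_k$ and invoking the conditional independence in Assumption \ref{as3}, we have $\Eb[v^k|\Fc_k] = g_{fx}^k + g_{fu}^k \nabla h(x^k)$ with $(g_{fx}^k,g_{fu}^k)\in\partial f(x^k,u^k)$, and $\Eb[\|v^k\|^2|\Fc_k]\le M^2$ by \eqref{gF_bound}. The main difficulty is that $g_{fx}^k + g_{fu}^k \nabla h(x^k)$ is \emph{not} a subgradient of $F$ at $x^k$, because $f$ is evaluated at $u^k$ instead of $h(x^k)$. To control this bias I would use three ingredients: (a) the joint $\varkappa\delta$-weak convexity of $f$ on $\Rb^n\times\Rb$, which follows from composing the nondecreasing convex map $\max(0,\cdot)$ with the $\tilde{\delta}(D)$-weakly convex map $(x,u)\mapsto \ell(x,D)-u$; (b) the $\delta$-smoothness of $h$, which implies $|\langle\nabla h(x^k),\hat{x}^k-x^k\rangle-[h(\hat{x}^k)-h(x^k)]|\le \tfrac{\delta}{2}\|\hat{x}^k-x^k\|^2$; and (c) the $1$-Lipschitz continuity of $f(x,\cdot)$ in $u$ coming from $g_{fu}\in[1-\varkappa,1]$. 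Applying (a) at the point $(\hat{x}^k,h(\hat{x}^k))$, combining with (b) after multiplying by $g_{fu}^k\in[0,1]$, and using (c) to replace $f(x^k,u^k)$ by $F(x^k)$ up to an error $|u^k-h(x^k)|$, one obtains
\begin{equation*}
\langle g_{fx}^k+g_{fu}^k\nabla h(x^k),\hat{x}^k-x^k\rangle \le F(\hat{x}^k) - F(x^k) + \tfrac{(1+\varkappa)\delta}{2}\|\hat{x}^k-x^k\|^2 + C_0\,|u^k-h(x^k)|,
\end{equation*}
with $C_0$ depending only on $\varkappa$.

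Next, $(\bar{\rho}-\rho)$-strong convexity of the proximal objective $y\mapsto F(y)+\tfrac{\bar{\rho}}{2}\|y-x^k\|^2$ at $y=x^k$ gives $F(\hat{x}^k)-F(x^k)\le -\tfrac{2\bar{\rho}-\rho}{2}\|\hat{x}^k-x^k\|^2$. Plugging this into the per-step inequality, the coefficient of $\|\hat{x}^k-x^k\|^2$ becomes $-\bar{\rho}\tau[(2\bar{\rho}-\rho)-(1+\varkappa)\delta]/2$, which with the calibrated choice $\bar{\rho}=\rho+(1+\varkappa)\delta$ simplifies to $-\bar{\rho}^2\tau/2$. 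Using the identity $\|\nabla\varphi_{1/\bar{\rho}}(x^k)\|^2=\bar{\rho}^2\|x^k-\hat{x}^k\|^2$ from \eqref{me_grad}, this yields
\begin{equation*}
\tfrac{\tau}{2}\Eb\big[\|\nabla\varphi_{1/\bar{\rho}}(x^k)\|^2\,\big|\,\Fc_k\big] \le \varphi_{1/\bar{\rho}}(x^k) - \Eb\big[\varphi_{1/\bar{\rho}}(x^{k+1})\,\big|\,\Fc_k\big] + \bar{\rho}\tau C_0\,|u^k-h(x^k)| + \tfrac{\bar{\rho} M^2}{2}\tau^2.
\end{equation*}

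To finish, I would take total expectation, sum for $k=0,\dots,N-1$, telescope (using $\varphi_{1/\bar{\rho}}(x^N)\ge \min_{x\in X}F(x)$), and bound $\sum_{k=0}^{N-1}\Eb|u^k-h(x^k)|$ via Lemma~\ref{smooth_error}. The geometric-series contribution from the initial tracking error, $\sum_{k=0}^{N-1}(1-\tau)^k|u^0-h(x^0)|\le |u^0-h(x^0)|/\tau$, combines with the $\bar{\rho}\tau$ prefactor to produce the $2\bar{\rho}|u^0-h(x^0)|$ term appearing in the theorem, while the leading $\sigma(1+M)\tau^{1/2}$ term in Lemma~\ref{smooth_error} contributes $N\bar{\rho}\tau\cdot\tau^{1/2}=N\bar{\rho}\tau^{3/2}$, giving $NC_3\tau^{3/2}$ with $C_3=2\bar{\rho}\sigma(1+M)$; any remaining $O(\tau^2)$ contributions can be absorbed into $\tau^{3/2}$ for $\tau\le 1$. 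Dividing by $N\tau/2$ and writing the average as $\Eb\|\nabla\varphi_{1/\bar{\rho}}(x^R)\|^2$ with $R$ uniform over $\{0,\dots,N-1\}$ then yields the claimed bound. The hardest step is the cross-term analysis: carefully fusing the joint weak convexity of $f$, the smoothness of $h$, and the Lipschitz-in-$u$ property of $f$ so that the residual quadratic terms fit exactly within the calibrated gap $\bar{\rho}-\rho=(1+\varkappa)\delta$, isolating the tracking-error bias that Lemma~\ref{smooth_error} can close.
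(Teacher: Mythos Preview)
Your proposal is correct and follows essentially the same approach as the paper: both analyze one step through the Moreau envelope, use non-expansiveness of $\Pi_X$, exploit the $(\bar\rho-\rho)$-strong convexity of the proximal subproblem with the calibrated choice $\bar\rho=\rho+(1+\varkappa)\delta$, control the bias via the $1$-Lipschitzness of $f(x,\cdot)$ and Lemma~\ref{smooth_error}, telescope, and average. The only stylistic difference is in the cross-term: the paper unpacks the explicit $\max$-structure of $f$ through the indicator $I^k$ to lower-bound $f(\hat{x}^k,h(\hat{x}^k))-f(x^k,u^k)$ directly, whereas you invoke the equivalent abstract joint weak convexity of $f$ in $(x,u)$ combined with the $\delta$-smoothness of $h$; both routes yield the same per-step inequality with the same $2\bar\rho\tau|u^k-h(x^k)|$ bias term and the same constants.
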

\begin{proof}
Defining $\hat{x}^k := \prox_{\varphi/\bar{\rho}}(x^k)$, we have
\begin{multline}
 f(\hat{x}^k,h(\hat{x}^k))-f(x^k,u^k) 
    = h(\hat{x}^k)-u^k \\{} + \varkappa \Eb\big[\max(0,\ell (\hat{x}^k,D_1^{k+1})-h(\hat{x}^k)) 
   - \max(0,\ell (x^k,D_1^{k+1})-u^k)\big|\Fc_k\big]. \label{eqn-diff-of-max}
\end{multline}
\mg{On the other hand,} according to Algorithm \ref{alg1}, if we denote
\begin{equation*}
    I^k = \begin{cases} 1 & \mbox{if} \quad \ell(x^{k}, D_1^{k+1}) \ge u^{k}, \\
0 & \mbox{if} \quad \ell(x^{k}, D_1^{k+1}) < u^{k},
\end{cases}
\end{equation*}
we can write $$\tilde{g}_{fx}^{k} = \varkappa I^k G^{k} \quad  \mbox{and} \quad \tilde{g}_{fu}^{k} = 1 - \varkappa I^k,$$ 
where we used the definitions \eqref{def_G}, \eqref{def_gx}, \eqref{def_gu}.
\mg{We can also estimate the difference of the ``max" terms in \eqref{eqn-diff-of-max} as}
\begin{align}
    \max&(0,\ell (\hat{x}^k,D_1^{k+1})-h(\hat{x}^k)) - \max(0,\ell (x^k,D_1^{k+1})-u^k) \nonumber\\
    &\ge
    \begin{cases}
    \ell (\hat{x}^k,D_1^{k+1})-h(\hat{x}^k) - (\ell (x^k,D_1^{k+1})-u^k),& \qquad \text{if } \ell (x^{k},D_1^{k+1}) \ge u^{k}, \nonumber\\
     0, &\qquad \text{if }\ell (x^{k},D_1^{k+1}) < u^{k},\nonumber\\
    \end{cases}\nonumber\\
    &= I^{k}\big(\ell (\hat{x}^k,D_1^{k+1}) - \ell(x^{k},D_1^{k+1}) - (h(\hat{x}^k) - u^k)\big)\nonumber\\
    &\ge I^{k}\big(\langle G^{k}, \hat{x}^k-x^k\rangle - \frac{\lz{\tilde{\delta}(D_1^{k+1})}}{2}\|\hat{x}^k-x^k\|^2 - (h(\hat{x}^k) - u^k)\big),
    \label{eqn-diff-of-max-2}
\end{align}
\mg{where we used the definition \eqref{def_G} of $G^k$ and the inequality \eqref{weak_conv} with $q(x)=\ell(x,D_1^{k+1})$ due to the weak convexity of $\ell$.} 
Denoting $$A^k = \varkappa\Eb\big[I^{k}G^{k}\big|\Fc_k\big],$$ 
\mg{and using \eqref{eqn-diff-of-max} and \eqref{eqn-diff-of-max-2}},
we obtain the following lower bound 
\begin{align}
    f(\hat{x}^k,h(\hat{x}^k))-f(x^k,u^k) &\ge \langle \hat{x}^k-x^k,A^k\rangle-\frac{\varkappa\delta}{2}\|\hat{x}^k-x^k\|^2\nonumber \\
    &\qquad +(1-\varkappa\Eb\big[I^{k}\big|\Fc_k\big]) (h(\hat{x}^k)-u^k). \label{ineq-lower-bound-f-diff}
\end{align}
Denoting $B^k=(1-\varkappa\Eb\big[I^{k}\big|\Fc_k\big])g_h^k$, we can estimate the last term as
\begin{equation}
\label{ineq-lower-bound-f-diff-last-term}
\begin{aligned}
    \lefteqn{(1-\varkappa\Eb\big[I^{k}\big|\Fc_k\big]) (h(\hat{x}^k)-u^k)}\quad\\
    &\ge (1-\varkappa\Eb\big[I^{k}\big|\Fc_k\big]) (h(x^k)-u^k 
+ \langle \hat{x}^k-x^k,g_h^k\rangle-\frac{\delta}{2}\|\hat{x}^k-x^k\|^2)\\
    &\ge \langle \hat{x}^k-x^k,B^k\rangle-\frac{\delta}{2}\|\hat{x}^k-x^k\|^2
   + (1-\varkappa\Eb\big[I^{k}\big|\Fc_k\big]) (h(x^k)-u^k).  
\end{aligned}
\end{equation}

\noindent\mg{Combining \eqref{ineq-lower-bound-f-diff} and \eqref{ineq-lower-bound-f-diff-last-term}, we obtain}
\begin{align}
    f(\hat{x}^k,h(\hat{x}^k))-f(x^k,u^k) &\ge \langle \hat{x}^k-x^k,A^k+B^k\rangle-\frac{(1+\varkappa)\delta}{2}\|\hat{x}^k-x^k\|^2\nonumber \\
    &\qquad +(1-\varkappa\Eb\big[I^{k}\big|\Fc_k\big]) (h(x^k)-u^k). \label{lower_bound_f}
\end{align}
Now we consider the change in the Moreau envelope:
\begin{align*}
    \lefteqn{\Eb\big[\varphi_{1/\bar{\rho}}(x^{k+1})\,\big|\,\Fc_k\big]
    \le \Eb\big[F(\hat{x}^k) + \frac{\bar{\rho}}{2}\|\hat{x}^k-x^{k+1}\|^2\,\big|\,\Fc_k\big]}\quad\\
    &= F(\hat{x}^k) + \frac{\bar{\rho}}{2}\Eb\Big[\big\|\Pi_X\big( x^k - \tau \big( \tilde{g}_{fx}^k + \tilde{g}_{fu}^k \tilde{g}_h^k\big)^T\big) - \Pi_X(\hat{x}^k)\big\|^2\,\Big|\,\Fc_k\Big]\\
    &\le F(\hat{x}^k) + \frac{\bar{\rho}}{2}\Eb\Big[\big\| x^k- \hat{x}^k - \tau(\tilde{g}_{fx}^k + \tilde{g}_{fu}^k \tilde{g}_h^k)^T\big\|^2\,\Big|\,\Fc_k\Big]\\
    &\le F(\hat{x}^k) + \frac{\bar{\rho}}{2}\| x^k- \hat{x}^k\|^2 + \bar{\rho}\tau \Eb\big[\langle \hat{x}^k-x^k, \tilde{g}_{fx}^k + \tilde{g}_{fu}^k \tilde{g}_h^k\rangle\,\big|\,\Fc_k\big] +\frac{\bar{\rho}M^2}{2}\tau^2.
\end{align*}
Noticing that $\Eb[\tilde{g}_{fx}^k + \tilde{g}_{fu}^k \tilde{g}_h^k |\Fc_k] = A^k+B^k$ and plugging in the lower bound (\ref{lower_bound_f}), we obtain:
\begin{align*}
    \Eb[\varphi_{1/\bar{\rho}}(x^{k+1})|\Fc_k] &\le \varphi_{1/\bar{\rho}}(x^{k})+\frac{\bar{\rho}M^2}{2}\tau^2+\bar{\rho}\tau(f(\hat{x}^k,h(\hat{x}^k))-f(x^k,u^k)\\
    &\qquad +\frac{(1+\varkappa)\delta}{2}\|\hat{x}^k-x^k\|^2+(1-\varkappa\Eb\big[I^{k}\big|\Fc_k\big]) (u^k-h(x^k)))\\
    &\le \varphi_{1/\bar{\rho}}(x^{k})+\frac{\bar{\rho}M^2}{2}\tau^2+\bar{\rho}\tau(F(\hat{x}^k)-F(x^k)\\
    &\qquad +\frac{(1+\varkappa)\delta}{2}\|\hat{x}^k-x^k\|^2) + \bar{\rho}\tau\|u^k-h(x^k)\|\\
    &\qquad + \bar{\rho}\tau(F(x^k)-f(x^k,u^k)).\\
\end{align*}
Since the function $x \mapsto F(x)+\frac{\bar{\rho}}{2}\| x^k- x\|^2$ is strongly convex with the parameter $\bar{\rho}-\rho>0$, then
\begin{align*}
    F(x^k)-F(\hat{x}^k) &= (F(x^k)+\frac{\bar{\rho}}{2}\| x^k- x^k\|^2)-(F(\hat{x}^k)+\frac{\bar{\rho}}{2}\| x^k- \hat{x}^k\|^2)\\
    &\qquad +\frac{\bar{\rho}}{2}\| x^k- \hat{x}^k\|^2 \ge (\bar{\rho}-\frac{\rho}{2})\| x^k- \hat{x}^k\|^2.
\end{align*}
\mg{Recalling that} 
$\bar{\rho} = \rho+(1+\varkappa)\delta$, and using (\ref{me_grad}), we obtain
\begin{align*}
    \Eb[\varphi_{1/\bar{\rho}}(x^{k+1})|\Fc_k] &\le \varphi_{1/\bar{\rho}}(x^{k})+\frac{\bar{\rho}M^2}{2}\tau^2+\bar{\rho}\tau(-(\bar{\rho}-\frac{\rho}{2})\| x^k- \hat{x}^k\|^2\\
    &\qquad +\frac{(1+\varkappa)\delta}{2}\|\hat{x}^k-x^k\|^2) + \bar{\rho}\tau\|u^k-h(x^k)\|\\
    &\qquad + \bar{\rho}\tau(F(x^k)-f(x^k,u^k))\\
    &= \varphi_{1/\bar{\rho}}(x^{k})+\frac{\bar{\rho}M^2}{2}\tau^2-\frac{1}{2}\tau\|\nabla \varphi_{1/\bar{\rho}}(x^k)\|^2\\
    &\qquad + \bar{\rho}\tau\|u^k-h(x^k)\| + \bar{\rho}\tau(F(x^k)-f(x^k,u^k)).\numberthis\label{upper_bound_me1}
\end{align*}
Using the fact that $f(x,u)$ is 1-Lipschitz with respect to $u$ together with the tracking error bound (\ref{tracking_err}), we have
\[
    \Eb[F(x^k)-f(x^k,u^k)] \le \Eb[|h(x^k)-u^k|] \le C_2\tau^{1/2}\lz{+(1-\tau)^{k}|u^0-h(x^0)|},
\]
\lz{where $C_2 = \sigma+\sigma M$.} In (\ref{upper_bound_me1}), taking the expectation of both sides, we get
\begin{align*}
    \Eb[\varphi_{1/\bar{\rho}}(x^{k+1})] &\le \Eb[\varphi_{1/\bar{\rho}}(x^{k})] -\frac{1}{2}\tau\Eb[\|\nabla \varphi_{1/\bar{\rho}}(x^k)\|^2]+\frac{\bar{\rho}M^2}{2}\tau^2+2\bar{\rho}C_2\tau^{3/2}\nonumber \\
    &\qquad \lz{+2\bar{\rho}\tau(1-\tau)^{k}|u^0-h(x^0)|}.
\end{align*}
The summation over $k$ from $0$ to $N-1$ yields
\begin{align*}
    \Eb[\varphi_{1/\bar{\rho}}(x^{N})] &\le \varphi_{1/\bar{\rho}}(x^{0})-\frac{1}{2}\tau\sum_{k=0}^{N-1}\Eb[\|\nabla \varphi_{1/\bar{\rho}}(x^k)\|^2]\\
    &\qquad +NC_3\tau^{3/2}\lz{+2\bar{\rho}|u^0-h(x^0)|},
\end{align*}
\lz{where $C_3 = 2\bar{\rho}C_2$.} Lower bounding the left hand-side by $\lz{\min_{x\in X} F}$ and rearranging, we obtain the bound
\begin{align*}
    \Eb[\|\nabla \varphi_{1/\bar{\rho}}(x^R)\|^2]&=\frac{1}{N}\sum_{k=0}^{N-1}\Eb[\|\nabla \varphi_{1/\bar{\rho}}(x^k)\|^2\\
    &\le 2\frac{\varphi_{1/\bar{\rho}}(x^{0})-\lz{\min_{x\in X} F(x)}\lz{+2\bar{\rho}|u^0-h(x^0)|}+NC_3\tau^{3/2}}{N\tau},
\end{align*}
which was set out to prove. 
\end{proof}
\begin{remark}\label{remark-sample-complexity-smooth}
\mg{If we choose $\tau = cN^{-2/3}$ for some constant $c>0$, a consequence of Theorem \ref{smooth_rate} is that $
    \Eb[\|\nabla \varphi_{1/\bar{\rho}}(x^R)\|^2] =  \mathcal{O}(\frac{1}{N^{1/3}})$ and therefore in order to get an $\varepsilon$-optimal point, i.e. for $\Eb[\|\nabla \varphi_{1/\bar{\rho}}(x^R)\|^2] \leq \varepsilon$,
    we will need $\mathcal{O}(\varepsilon^{-3})$ iterations}. Since we use three data samples at each step in Algorithm \ref{alg1}, we obtain the total sample complexity of 
    $S_\varepsilon = 24(\varphi_{1/\bar{\rho}}(x^{0})-\lz{\min_{x\in X} F(x)}\\ \lz{ + 2\bar{\rho}|u^0-h(x^0)|}+C_3)^3 \varepsilon^{-3} = \mathcal{O}(\varepsilon^{-3})$.
\end{remark}

\section{Convergence rate for non-smooth weakly convex loss functions}
\label{s:4}

The SCS method uses a biased estimator of the
values of the inner function $h(x)$, which induces errors that are harder to bound when the loss function is non-smooth.
This is an organic difficulty in non-smooth and non-convex composite stochastic optimization, precluding the derivation of the convergence rate. Therefore, instead of updating the inner function estimate $\{u^k\}$ by a linear tracking filter,  we construct the SPIDER estimator \citep{fang2018spider}:
\lz{
\begin{equation}
\begin{aligned}
    u^{0} &= \ell_{\Bc^{0}}(x^{0}),\\
    u^{k} &= u^{k-1} + \ell_{\Bc^{k}}(x^{k}) - \ell_{\Bc^{k}}(x^{k-1}),
\end{aligned}
\label{def_spider}
\end{equation}
}
where $\Bc^{k}$ is a randomly picked mini-batch of data at the $k$-th iteration, and
$\ell_{\Bc^{k}}(x) \equiv \\ \frac{1}{|\Bc^{k}|}\sum_{D\in \Bc^{k}}\ell (x,D)$. \mg{This estimator operates in epochs and admits three parameters: the epoch length $T$, the standard batch size $b$, and the larger batch size $B>b$}. 
It restarts at the beginning of each epoch, uses the large batch at the first step, and the standard batch at the following steps. Furthermore, the SPIDER estimator is an unbiased estimator:
\begin{equation*}
    \Eb[u^k-h(x^k)] = 0,
\end{equation*}
where the expectation is taken with respect to all random observations up to iteration  $k$. The SCS method with SPIDER is described in Algorithm \ref{alg2}. 

\begin{algorithm}
\caption{\mg{SCS} method with SPIDER}\label{alg2}

\begin{algorithmic}[1]
\small
\Require initial point $x^0 \in X$, a constant stepsize $\tau \in \big(0, 1\big]$; SPIDER epoch length $T$, large batch size $B$ and small batch size $b$.
\For{$k=0,1,...,N-1$}
\lz{
\If{$k$ mod $T== 0$}
\State Randomly sample a data batch $\Bc^{k}$ with $|\Bc^{k}|==B$
\State Restart the inner function estimate:
\begin{equation*}
    u^{k} = \ell_{\Bc^{k}}(x^{k});
\end{equation*}
\Else
\State Randomly sample a data batch $\Bc^k$ with $|\Bc^k|==b$
\State Update the inner function estimate:
\begin{equation*}
    u^{k} = u^{k-1} + \ell_{\Bc^{k}}(x^{k}) - \ell_{\Bc^{k}}(x^{k-1}).
\end{equation*}
\EndIf
}
\State 
Randomly sample $D_1^{k+1}$ and $D_2^{k+1}$ 
and obtain the estimates
\begin{align*}
G^{k} &\in \partial_x \ell(x^{k}, D_1^{k+1}),\\
\tilde{g}_{fx}^{k} &= \begin{cases} 0 & \text{if } \ell(x^{k}, D_1^{k+1}) < u^{k},\\
                                    \varkappa G^{k} & \text{if } \ell(x^{k}, D_1^{k+1}) \ge u^{k},
                                    \end{cases}\\
\tilde{g}_{fu}^{k} &=\begin{cases} 1 & \text{if } \ell(x^{k}, D_1^{k+1}) < u^{k},\\
                                   1-\varkappa & \text{if } \ell(x^{k}, D_1^{k+1}) \ge u^{k},
                                    \end{cases}\\
\tilde{g}_{h}^{k} &\in \partial_x \ell(x^k, D_2^{k+1}). 
\end{align*}
\State Update the solution estimate
\begin{equation}
x^{k+1} = \Pi_X\Big( x^k - \mgtwo{\tau} \big( \tilde{g}_{fx}^k + \tilde{g}_{fu}^k \tilde{g}_h^k\big)^T\Big),\label{update_x_ns}
\end{equation}
\EndFor
\Ensure $x^R$ with $R$ uniformly sampled from $\{0,1,\dots,N-1\}$.
\end{algorithmic}
\end{algorithm}


\mg{For handling non-smooth losses, instead of Assumption \ref{as2}, we assume that the loss is only weakly convex:}
\begin{assumption}
\lz{For all $x$ in a neighborhood of the set $X$:
\begin{tightlist}{(ii)}
\item The function $\ell(x,\cdot)$ is integrable;
\item The function $\ell(\cdot,D)$ is weakly convex
with an integrable constant $\Tilde{\delta}(D)$.
\end{tightlist}
}
\label{as4}
\end{assumption}
\lz{\begin{remark}
Under Assumption \ref{as4}, the inner function $h(x)$ is $\delta$-weakly convex, where $\delta := \Eb[\Tilde{\delta}(D)]$. Since the outer function $f(x,u)$ is weakly convex with respect to $x$ and nondecreasing and convex with respect to $u$, the composite function $F(x)$ is also weakly convex.
\end{remark}}

By virtue of Assumptions \ref{as1} and \ref{as4}, the loss function $\ell (\cdot,D)$ (as a difference of a convex function and a quadratic function) is Lipschitz continuous on the feasible set $X$ for any arbitrary $D$, with some Lipschitz constant $\tilde{L}(D)$. We make an additional assumption
about this constant.

\begin{assumption}
The Lipschitz constant $\tilde{L}(D)$ of the loss function $\ell (x,D)$ with respect to $x$ is square-integrable: 
\[
L^2 \equiv \Eb[\tilde{L}^2(D)] < +\infty.
\]
\label{as5}
\end{assumption}

\begin{remark}
Assumption \ref{as5} implies the Mean-Squared Lipschitz (MSL) property \citep{nguyen2022finite,pham2020proxsarah} required by the SPIDER estimator:
\[
\Eb[|\ell(x,D)-\ell(y,D)|^2]\le L^2\|x-y\|^2.
\]
The composite subgradient bound \eqref{gF_bound} automatically follows in this case. 
\end{remark}

Now the loss function $\ell(x,D)$ becomes \lz{$\Tilde{\delta}(D)$}-weakly convex (instead of smooth) with respect to $x$ at any $D$, note that everything is still valid in the rate convergence analysis in Section 3, except the tracking error bound (\ref{tracking_err}). 
Therefore, we need to estimate the tracking errors of the SPIDER estimator; \mg{such estimates are already available in the literature as stated in the next result.}

\begin{lemma}
\cite[Lemma 1]{fang2018spider} Suppose the loss function $\ell(x,D)$ is Mean-Squared Lipschitz with a constant $L$. Then the MSE of the estimator in \eqref{def_spider} can be bounded as
\[
\Eb[|u^k-h(x^k)|^2] \le \Eb[|u^0-h(x^0)|^2] + \sum_{r=1}^{k}\frac{L^2}{|\Bc^r|}\Eb[\|x^r-x^{r-1}\|^2], \qquad k = 1,...,T-1.
\]
If we can control the step lengths $\|x^k-x^{k-1}\|\le s\,$ for $k=1,...,T-1$, then
\[
\Eb[|u^0-h(x^0)|^2]\le\cdots\le\Eb[|u^{T-1}-h(x^{T-1})|^2]\le\frac{\sigma^2}{B}+\frac{TL^2s^2}{b}.
\]
\label{spider_error}
\end{lemma}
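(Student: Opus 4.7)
The plan is to unroll the recursion \eqref{def_spider} and exploit a martingale decomposition. Write
\[
u^k - h(x^k) = u^0 - h(x^0) + \sum_{r=1}^{k} M_r,
\]
where
\[
M_r := \big[\ell_{\Bc^r}(x^r) - \ell_{\Bc^r}(x^{r-1})\big] - \big[h(x^r) - h(x^{r-1})\big].
\]
The first step is to check that $M_r$ is a martingale difference with respect to the natural filtration that includes the trajectory up to iteration $r$ and excludes the sampling of $\Bc^r$: since $\Bc^r$ is drawn i.i.d. from $\Pb$ independently of $\{x^0,\dots,x^r\}$, one has $\Eb[\ell_{\Bc^r}(x) \,|\, x] = h(x)$ for any $x$ measurable with respect to the earlier $\sigma$-algebra, so $\Eb[M_r \,|\, \text{past}] = 0$. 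This kills all cross-terms in $\Eb\big[|u^k-h(x^k)|^2\big]$, leaving
\[
\Eb\big[|u^k-h(x^k)|^2\big] = \Eb\big[|u^0-h(x^0)|^2\big] + \sum_{r=1}^{k} \Eb\big[|M_r|^2\big].
\]

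Next I would bound each $\Eb\big[|M_r|^2\big]$. Since $M_r$ is the mean of $|\Bc^r|$ i.i.d. centered terms of the form $\ell(x^r,D)-\ell(x^{r-1},D) - (h(x^r)-h(x^{r-1}))$ (conditioned on the past), the conditional variance equals $|\Bc^r|^{-1}$ times the variance of a single term, which is at most its second moment $\Eb\big[|\ell(x^r,D)-\ell(x^{r-1},D)|^2 \,\big|\, x^r,x^{r-1}\big]$. Applying the Mean-Squared Lipschitz property from Assumption \ref{as5} then gives $\Eb\big[|M_r|^2\big] \le \frac{L^2}{|\Bc^r|}\,\Eb\big[\|x^r-x^{r-1}\|^2\big]$, yielding the first displayed bound of the lemma.

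For the refined estimate, I would plug in the SPIDER schedule. At the epoch restart (relabel so that $u^0$ comes from the large batch), $u^0 = \ell_{\Bc^0}(x^0)$ is a sample average of $B$ i.i.d. unbiased estimators of $h(x^0)$, and the i.i.d. variance assumption (absorbed into the generic noise level $\sigma$) gives $\Eb\big[|u^0-h(x^0)|^2\big] \le \sigma^2/B$. Inside the epoch each $|\Bc^r|=b$ and $\|x^r-x^{r-1}\|\le s$, so summing the per-step bound over $r=1,\dots,T-1$ yields at most $(T-1)L^2 s^2 / b \le TL^2 s^2 / b$, and combining the two contributions gives the claimed
\[
\Eb\big[|u^{T-1}-h(x^{T-1})|^2\big] \le \frac{\sigma^2}{B} + \frac{TL^2 s^2}{b}.
\]

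The main obstacle is handling the subtle conditioning: $x^r$ itself depends on the random batches used in forming previous subgradient estimates, so one has to be careful that the batch $\Bc^r$ used to form $M_r$ is drawn independently of $x^r$. This is guaranteed by the algorithm's sampling rule (fresh $\Bc^r$ at iteration $r$), but it must be stated explicitly so that the tower property and the per-coordinate variance bound $\mathrm{Var}(\bar Y)= \mathrm{Var}(Y)/|\Bc^r|$ can be invoked. Once this is cleanly set up, the rest is an application of MSL and the step-size control $\|x^r-x^{r-1}\|\le s$ delivered by \eqref{update_x_ns} together with the boundedness of the subgradient estimators.
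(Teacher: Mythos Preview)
Your argument is correct and is essentially the standard SPIDER variance proof. Note, however, that the paper does not actually supply a proof of this lemma: it is stated with an explicit citation to \cite[Lemma 1]{fang2018spider} and is used as a black box. So there is no ``paper's proof'' to compare against; what you have written is the standard martingale-difference derivation from that reference.

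Two small points worth tightening. First, in the equality
\[
\Eb\big[|u^k-h(x^k)|^2\big] = \Eb\big[|u^0-h(x^0)|^2\big] + \sum_{r=1}^{k} \Eb\big[|M_r|^2\big],
\]
you also need the cross terms between $u^0-h(x^0)$ and each $M_r$ to vanish; this follows because $u^0-h(x^0)$ is measurable with respect to the pre-$\Bc^r$ filtration for $r\ge 1$, but it deserves a word. Second, the chain of inequalities $\Eb[|u^0-h(x^0)|^2]\le\cdots\le\Eb[|u^{T-1}-h(x^{T-1})|^2]$ in the lemma statement follows immediately from your equality since each $\Eb[|M_r|^2]\ge 0$; you use this implicitly but do not state it. Neither of these is a gap in the logic, just places where one extra sentence would make the write-up self-contained.
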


\mg{Building on this lemma, we next obtain tracking error bounds for the  sequence $\{u^k\}$ for a particular choice of the SPIDER parameters $T$, $b$, and $B$.} 

\begin{lemma}
Suppose Assumption \ref{as1} and Assumptions \ref{as3} to \ref{as5} hold. Then for $B = \frac{2\sigma^2}{\tau^2}$, $b=2LM\sigma/\tau$, $T=\frac{\sigma}{LM\tau}$, the sequence $\{u^k\}$ generated by Algorithm \ref{alg2} satisfies:
\begin{equation}
\Eb\big[ |u^{k} - h(x^{k})| \big] \le  \tau, \qquad k = 0,1,...,N-1.
\label{tracking_err_ns}
\end{equation}
\label{nonsmooth_error}
\end{lemma}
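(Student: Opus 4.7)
The plan is to apply Lemma \ref{spider_error} within each SPIDER epoch, using the projected subgradient update \eqref{update_x_ns} together with the bound \eqref{gF_bound} to control the squared step lengths in expectation, and then convert the $L^2$ tracking bound to an $L^1$ bound by Jensen's inequality.

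First I would bound the step lengths. By the non-expansiveness of the Euclidean projection onto $X$, the update \eqref{update_x_ns} yields $\|x^{k+1}-x^k\| \le \tau\|\tilde{g}_{fx}^k + \tilde{g}_{fu}^k\tilde{g}_h^k\|$, so by \eqref{gF_bound}, $\Eb\bigl[\|x^{k+1}-x^k\|^2\,\big|\,\Fc_k\bigr] \le \tau^2 M^2$, hence $\Eb[\|x^{k+1}-x^k\|^2] \le \tau^2 M^2$ for every $k$. Next, within a SPIDER epoch starting at some iteration $\tilde{k}$ (i.e.\ $\tilde{k} \equiv 0 \pmod T$), the estimator is restarted with the large batch $\Bc^{\tilde k}$ of size $B$, so Assumption \ref{as3} gives
\[
\Eb\bigl[|u^{\tilde k}-h(x^{\tilde k})|^2\bigr] \le \frac{\sigma^2}{B}.
\]
For any $k$ with $\tilde k \le k \le \tilde k + T-1$, Lemma \ref{spider_error} (applied to the epoch with the standard batch size $b$) then gives
\[
\Eb\bigl[|u^{k}-h(x^{k})|^2\bigr] \le \frac{\sigma^2}{B} + \sum_{r=\tilde k+1}^{k}\frac{L^2}{b}\,\Eb\bigl[\|x^{r}-x^{r-1}\|^2\bigr] \le \frac{\sigma^2}{B} + \frac{TL^2\tau^2 M^2}{b}.
\]

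Now I would substitute the prescribed parameters $B=2\sigma^2/\tau^2$, $b=2LM\sigma/\tau$, $T=\sigma/(LM\tau)$. A direct calculation gives $\sigma^2/B = \tau^2/2$ and $TL^2\tau^2 M^2/b = \tau^2/2$, so
\[
\Eb\bigl[|u^{k}-h(x^{k})|^2\bigr] \le \tau^2,\qquad k = 0,1,\dots,N-1.
\]
Finally, Jensen's inequality (or Cauchy--Schwarz) gives $\Eb[|u^k - h(x^k)|] \le \bigl(\Eb[|u^k-h(x^k)|^2]\bigr)^{1/2} \le \tau$, which is \eqref{tracking_err_ns}.

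The only subtle point is the step-length hypothesis in Lemma \ref{spider_error}: the statement reads as a pointwise bound $\|x^r - x^{r-1}\| \le s$, but what actually enters its conclusion (and its proof by martingale / variance arguments) is the second moment $\Eb[\|x^r-x^{r-1}\|^2]$, which we control uniformly by $\tau^2 M^2$ via projection non-expansiveness and \eqref{gF_bound}. Provided one reads Lemma \ref{spider_error} in this (standard) $L^2$ form, the parameter choices balance the initialization error $\sigma^2/B$ and the accumulated per-step variance $TL^2 s^2/b$ at the level $\tau^2$, and the proof is complete.
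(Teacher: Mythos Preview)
Your proposal is correct and follows essentially the same route as the paper: bound the step lengths via projection non-expansiveness and \eqref{gF_bound}, invoke Lemma~\ref{spider_error} within an epoch, substitute the prescribed $B$, $b$, $T$ to get $\Eb[|u^k-h(x^k)|^2]\le\tau^2$, and conclude by Jensen. The one difference is that the paper asserts a \emph{pointwise} bound $\|x^k-x^{k-1}\|\le M\tau$ and then uses the second displayed inequality of Lemma~\ref{spider_error}, whereas you control $\Eb[\|x^k-x^{k-1}\|^2]\le M^2\tau^2$ and plug into the first displayed inequality; your version is arguably cleaner, since \eqref{gF_bound} is only a second-moment bound, and it lands on exactly the same numerical estimate.
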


\begin{proof}
According to the update function \eqref{update_x_ns} and the composite subgradient bound \eqref{gF_bound}, the step lengths are bounded:
\[
\|x^k-x^{k-1}\|\le M\tau, \qquad k = 0,1,...,N-1.
\]
Under Assumption \ref{as5}, $\ell(x,D)$ is Mean-Squared Lipschitz (MSL) with a constant $L$. 
Also, by virtue of Lemma \ref{spider_error}, the tracking errors of $\{u^k\}$ satisfy:
\[
\Eb[|u^k-h(x^k)|^2]\le \frac{\sigma^2}{B} + \frac{TL^2M^2\tau^2}{b},\qquad k=0,1,...,N-1.
\]
Therefore, if we choose
\[
B = \frac{2\sigma^2}{\tau^2},\qquad b=2LM\sigma/\tau,\qquad T=\frac{\sigma}{LM\tau},
\]
we can get an upper bound for the tracking errors:
\begin{align*}
    \Eb[|u^k-h(x^k)|^2]&\le \tau^2.
\end{align*}
Using Jensen's inequality, we immediately get \eqref{tracking_err_ns}.
\end{proof}

The remaining rate analysis follows the same way as in the continuously differentiable case and we obtain the following result. 

\begin{theorem}
\label{nonsmooth_rate}
If Assumption \ref{as1} and Assumptions \ref{as3} to \ref{as5} hold,then for every $N\ge 1$, the random point $x^R$ generated by Algorithm \ref{alg2} satisfies:
\begin{equation*}
    \Eb[\|\nabla \varphi_{1/\bar{\rho}}(x^R)\|^2]
    \le 2\frac{\varphi_{1/\bar{\rho}}(x^{0})-\lz{\min_{x\in X} F(x)}+NC_4\tau^2}{N\tau},
\end{equation*}
where \lz{$C_4 = 2\bar{\rho}(\frac{1}{4}M^2+1)$}, the expectation is taken with respect to the trajectory generated by Algorithm \ref{alg2}, and the random variable $R$ is uniformly sampled from $\{0,1,\dots,T-1\}$ and independent of the trajectory.

\end{theorem}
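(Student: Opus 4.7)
The plan is to mirror the proof of Theorem \ref{smooth_rate} step by step, with two substitutions: first, replacing Lemma \ref{smooth_error} by the SPIDER tracking bound from Lemma \ref{nonsmooth_error}, and second, using Assumption \ref{as4} directly (rather than smoothness plus compactness) to supply the $\tilde{\delta}(D)$-weak convexity of $\ell(\cdot,D)$ that the lower-bound derivation actually requires. Crucially, the chain of inequalities in the smooth proof that yields \eqref{lower_bound_f} never uses differentiability of $\ell$ beyond invoking \eqref{weak_conv} with $q(x)=\ell(x,D_1^{k+1})$, so it transfers verbatim. The composite-subgradient bound \eqref{gF_bound} is preserved by the remark following Assumption \ref{as5}.

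Concretely, I would set $\hat{x}^k := \prox_{\varphi/\bar{\rho}}(x^k)$ and reproduce the estimate of $f(\hat{x}^k,h(\hat{x}^k))-f(x^k,u^k)$ leading to
\[
f(\hat{x}^k,h(\hat{x}^k))-f(x^k,u^k) \ge \langle \hat{x}^k-x^k, A^k+B^k\rangle - \tfrac{(1+\varkappa)\delta}{2}\|\hat{x}^k-x^k\|^2 + (1-\varkappa\Eb[I^k|\Fc_k])(h(x^k)-u^k),
\]
using exactly the definitions \eqref{def_G}--\eqref{def_gu} of Algorithm \ref{alg2} (identical to those in Algorithm \ref{alg1}) and the weak convexity granted by Assumption \ref{as4}. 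Then I would expand $\Eb[\varphi_{1/\bar{\rho}}(x^{k+1})|\Fc_k]$ using the definition of the Moreau envelope, nonexpansiveness of $\Pi_X$, square-norm expansion, and \eqref{gF_bound}; combining with the lower bound, the strong convexity of $y\mapsto F(y)+\tfrac{\bar{\rho}}{2}\|y-x^k\|^2$, and \eqref{me_grad}, I would reach the exact analog of \eqref{upper_bound_me1}:
\[
\Eb[\varphi_{1/\bar{\rho}}(x^{k+1})|\Fc_k] \le \varphi_{1/\bar{\rho}}(x^k) + \tfrac{\bar{\rho}M^2}{2}\tau^2 - \tfrac{1}{2}\tau\|\nabla\varphi_{1/\bar{\rho}}(x^k)\|^2 + \bar{\rho}\tau|u^k-h(x^k)| + \bar{\rho}\tau(F(x^k)-f(x^k,u^k)).
\]

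Next, I would use that $f(x,\cdot)$ is 1-Lipschitz to bound $F(x^k)-f(x^k,u^k)\le |h(x^k)-u^k|$, take full expectations, and apply Lemma \ref{nonsmooth_error} to obtain $\Eb|u^k-h(x^k)|\le \tau$ uniformly in $k$. This gives the one-step descent
\[
\Eb[\varphi_{1/\bar{\rho}}(x^{k+1})] \le \Eb[\varphi_{1/\bar{\rho}}(x^k)] - \tfrac{1}{2}\tau\,\Eb[\|\nabla\varphi_{1/\bar{\rho}}(x^k)\|^2] + \tfrac{\bar{\rho}M^2}{2}\tau^2 + 2\bar{\rho}\tau^2,
\]
so that summing from $k=0$ to $N-1$, lower-bounding $\Eb[\varphi_{1/\bar{\rho}}(x^N)]\ge \min_{x\in X}F(x)$, dividing by $N\tau/2$, and noting that the left-hand side equals $\Eb[\|\nabla\varphi_{1/\bar{\rho}}(x^R)\|^2]$ for $R$ uniform on $\{0,\dots,N-1\}$, yields the claimed bound with $C_4 = 2\bar{\rho}(\tfrac{1}{4}M^2+1)$.

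The main obstacle, already absorbed into Lemma \ref{nonsmooth_error}, was bounding the tracking error in the non-smooth regime; once that bound is $O(\tau)$ rather than $O(\tau^{1/2})$, the final numerator term sharpens from $\tau^{3/2}$ to $\tau^2$, which is the only visible difference from Theorem \ref{smooth_rate}. A minor care point is that the SPIDER bound in Lemma \ref{nonsmooth_error} relies on the uniform step-length control $\|x^{k+1}-x^k\|\le M\tau$, which follows from the projection update \eqref{update_x_ns} and \eqref{gF_bound}; this must be invoked before plugging in the tracking estimate, but it is immediate from the algorithm and does not require any new argument.
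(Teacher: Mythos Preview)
Your proposal is correct and follows essentially the same approach as the paper, which simply states that the proof proceeds as in Theorem \ref{smooth_rate} with the tracking bound \eqref{tracking_err} replaced by \eqref{tracking_err_ns}. You have supplied the details the paper omits, correctly observing that the derivation of \eqref{lower_bound_f} requires only weak convexity of $\ell(\cdot,D)$ (now furnished directly by Assumption \ref{as4}) and that the improved $O(\tau)$ tracking error is what converts the $\tau^{3/2}$ term into $\tau^2$, yielding the stated constant $C_4$.
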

\begin{proof}
The proof follows in the same way as Theorem \ref{smooth_rate} \mg{with the exception that the tracking bound (\ref{tracking_err}) is to be replaced with the bound \eqref{tracking_err_ns}}.
\end{proof}

\begin{remark}
If we choose $\tau = N^{-1/2}$, in order to get an $\varepsilon$-optimal point, we will need $\mathcal{O}(\varepsilon^{-2})$ iterations. 
With the choices of hyperparameters in Lemma \ref{nonsmooth_error}, the average batch size per iteration will be
\[
b + B/T = 4LM\sigma/\tau,
\]
so the \mg{total} sample complexity we obtain is $$S_\varepsilon = 32(\varphi_{1/\bar{\rho}}(x^{0})-\lz{\min_{x\in X} F(x)}\\+C_4)^3LM\sigma \varepsilon^{-3} =\mathcal{O}(\varepsilon^{-3}). $$ 
\mg{We conclude that the sample size of $O(\varepsilon^{-3})$ is sufficient for both smooth and non-smooth cases, with only the constant larger in the non-smooth case (see Remark \ref{remark-sample-complexity-smooth}).}
\end{remark}

\section{Numerical Experiments}

In this section, we report results of numerical experiments that \mg{illustrate the convergence and robustness of our SCS method on an adversarial learning task in deep learning and on some logistic regression problems with non-smooth non-convex regularizers}.
{Our numerical results were obtained using Python (Version 3.7) on an Alienware Aurora R8 desktop with a 3.60 GHz CPU (i7-2677M) and 16GB memory.}

\subsection{Deep Learning}

We consider a convolutional neural network applied to the MNIST data set \citep{lecun2010mnist}. The network consists of three convolutional layers followed by a dense layer. All the hidden layers have ELU activations, and the output layer has the softmax activation. The convolutional layers have 16, 32, and 32 kernels of size 8, 6, and 5, respectively. We construct the network in this particular way to generate comparable results to \cite{sinha2018certifying}.

{The MNIST dataset consists of \mg{handwritten images with an integer label valued from 0 to 9 where the aim 
is to} classify the images. We use the cross entropy loss during training 
(see reference \href{https://pytorch.org/docs/stable/generated/torch.nn.CrossEntropyLoss.html}{here}). The resulting loss function $\ell(x,D)$ is a composition of the CNN and the cross-entropy loss, and is continuously differentiable in our setting, \mg{as the ELU activation functions are continuously differentiable}. \mgtwo{We also set a bound on the weights of all hidden units, which is equivalent to choosing the feasibility set $X = \{x : \|x\|_\infty \leq 10\}$. Such box constraints are employed frequently in practice for regularization purposes \citep{srivastava2014dropout}}.}

We train the CNN with different optimizers, namely SGD, SCS (with different values of $\varkappa$ in Algorithm \ref{alg1}) and another state-of-the-art method Wasserstein Robust Method (WRM) 
\citep{sinha2018certifying}.
\lz{To investigate the robustness of the trained networks, we \mgtwo{consider} 
two types of \mgtwo{(adversarial attacks) perturbations} to the test dataset}: the PGM attacks \citep{sinha2018certifying,mkadry2017towards} and the semi-deviation attacks, \mgtwo{which we will describe next}. 
\paragraph{PGM attack.}
\mgtwo{Given model parameter} $x$ \mgtwo{and data point $D = (a,b)$ with input $a$ and output $b$, the main idea is to create an adversarial input data by applying multi-step projected gradient ascent to the loss function in a ball around the data point.} 
Specifically, for every data point $(a_i,b_i)$, we iterate
\begin{align*}
\nabla a_i^{t}(x) &:= \argmax_{\|\eta\|_2\le \epsadv}\{ \nabla_a\ell(x;a_i^t,b_i)^T\eta \},\\
a_i^{t+1} &:= \Pi_{B(a^t_i)}\{ a^t_i + \tau_{\textit{adv}}\nabla a_i^t(x) \},
\end{align*}
for $t=1,...,T_\textit{adv}$, where $B(a^t_i):=\{a: \|a-a_i^t\|_2\le\epsadv\}$ is a ball around $a^t_i$, $\epsadv$ controls the adversarial perturbation level and \mgtwo{$T_\textit{adv}$ is the number of iterations. We refer the reader to \citep{mkadry2017towards} for further details.}
\paragraph{Semi-deviation attack.}
\mg{While PGM attacks create a powerful worst-case adversary, this can be pretty conservative if perturbations to the dataset has a random-like nature rather than a worst-case nature. We introduce semi-deviation attacks which create an adversary that is not as conservative by replacing} 
a ``good'' instance with an ``average'' instance. More specifically, for every data point $D_i$, we replace the corresponding loss $\ell(x,D_i)$ with $\ell_\textit{sd}(x,D_i)$:
\begin{align*}
    \bar{\ell}(x) &:= \frac{1}{|\Bc_\textit{test}|}\sum_{D_j\in \Bc_\textit{test}}\ell(x,D_j),\\
    \ell_\textit{sd}(x,D_i) &:=  \bar{\ell}(x) + \varkappa_\textit{adv} \max(0, \ell(x,D_i)-\bar{\ell}(x)),
\end{align*}
where $\Bc_\textit{test}$ denotes the test data set, $\varkappa_\textit{adv}$ controls the adversarial perturbation level. 



We control the number of gradient evaluations in training to be the same across different methods. \footnote{
\lz{
We train the model with SCS for 20 epochs and with WRM for 4 epochs, since at each step, SCS evaluates 3 gradients and WRM evaluates 16 gradients.
}
} 
\mgtwo{The training data is the original (uncontaminated) MNIST data, whereas the models are tested with the contaminated data subject to PGM attacks and semi-deviation attacks:} \lz{for each data point in the test set, we apply the PGM attack and the semi-deviation attacks} at different perturbation levels and plot the average test losses of different models at different perturbation levels in Plots (a) and (c) of Figure \ref{exp_ml}. When $\epsadv = 0.6$ for PGM attacks, or $\varkappa_\textit{adv} = 1$ for semi-deviation attacks, \lz{we also calculate the 
 (natural) logarithm of the losses in the test set and show the logarithm of the loss distributions of different models} in Plots (b) and (d) of Figure \ref{exp_ml}. We see that \mgtwo{SCS} with a proper $\varkappa$ value generates a better solution than SGD under both types of attacks. It's not as good as WRM under the PGM attacks, which was expected, since WRM is trained against very similar attacks.\lz{\footnote{
At the $k$-th step, WRM perturbs the data point $D^k = (a^k,b^k)$ with gradient ascent \mgtwo{applied to the cost} $a\mapsto \ell(x;a,b^k)-\gamma_{\textit{adv}}c(a,a^k)$, where $\gamma_{\textit{adv}}$ \mgtwo{is a parameter that} controls the robustness and $c(a,a^k) := \|a-a^k\|^2$ \mgtwo{is a regularizer}. 
In the experiment, we iterate 15 times during the gradient ascent at each step.
}
}For the same reason, SCS outperforms WRM under semi-deviation attacks.


\begin{figure}[ht]
\centering
\begin{subfigure}{0.48\textwidth}
  \includegraphics[width=1\linewidth]{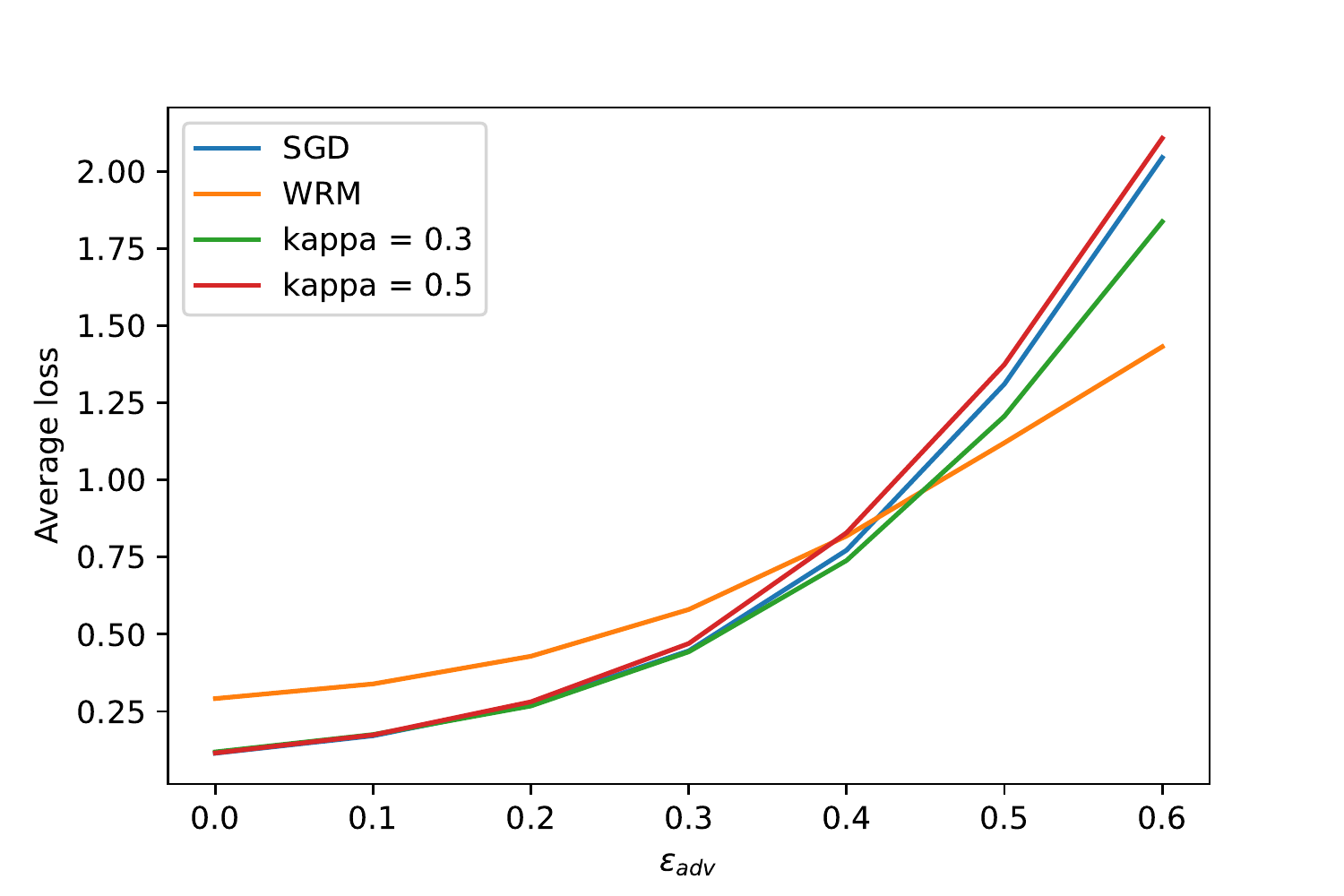}
  \caption{Average losses under different perturbation\\ levels of PGM attacks}
\end{subfigure}
\begin{subfigure}{0.48\textwidth}
  \includegraphics[width=1\linewidth]{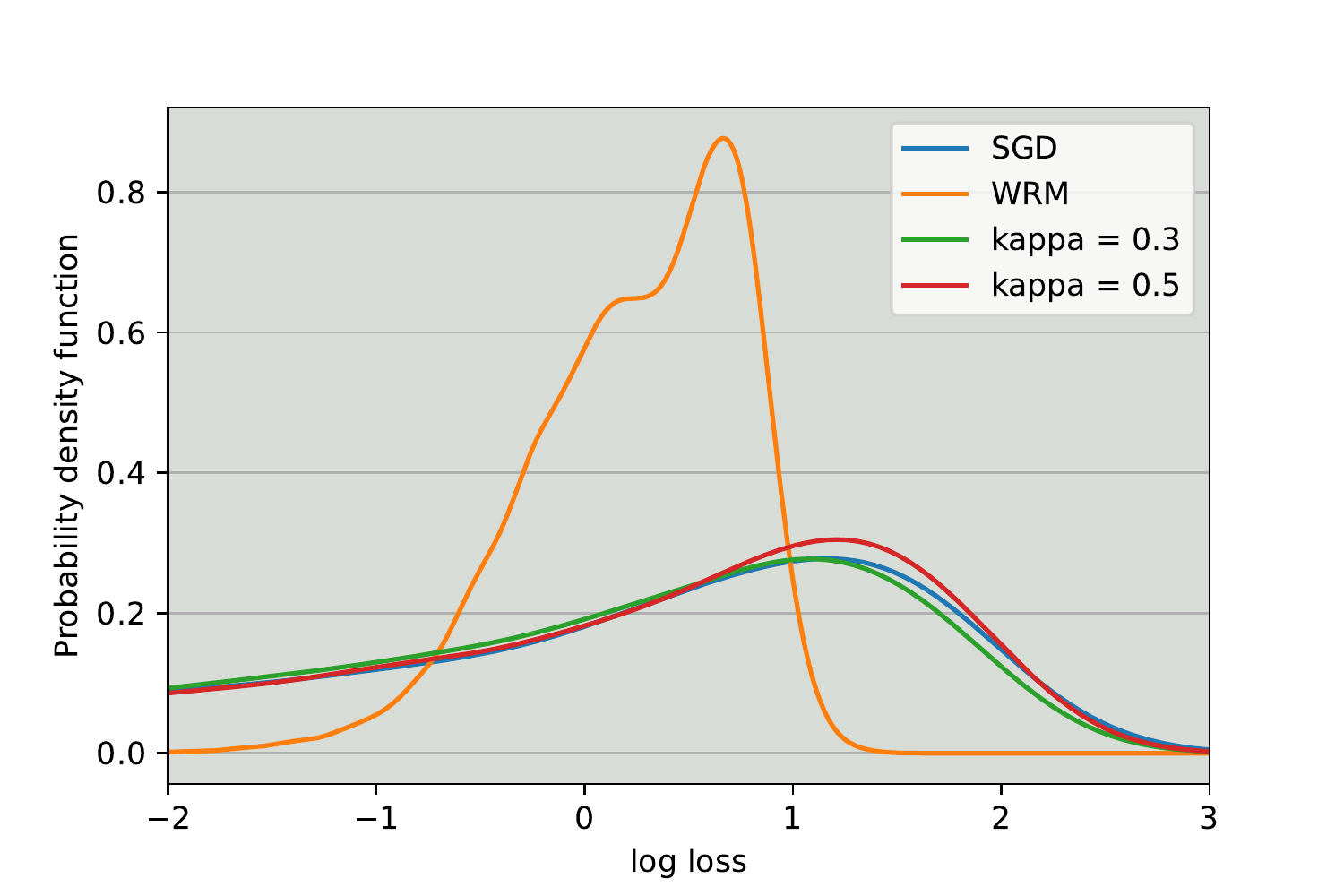}
  \caption{\mgtwo{Probability density function (PDF) of the logarithm of the loss function subject to} PGM attacks}
\end{subfigure}
\begin{subfigure}{0.48\textwidth}
  \includegraphics[width=1\linewidth]{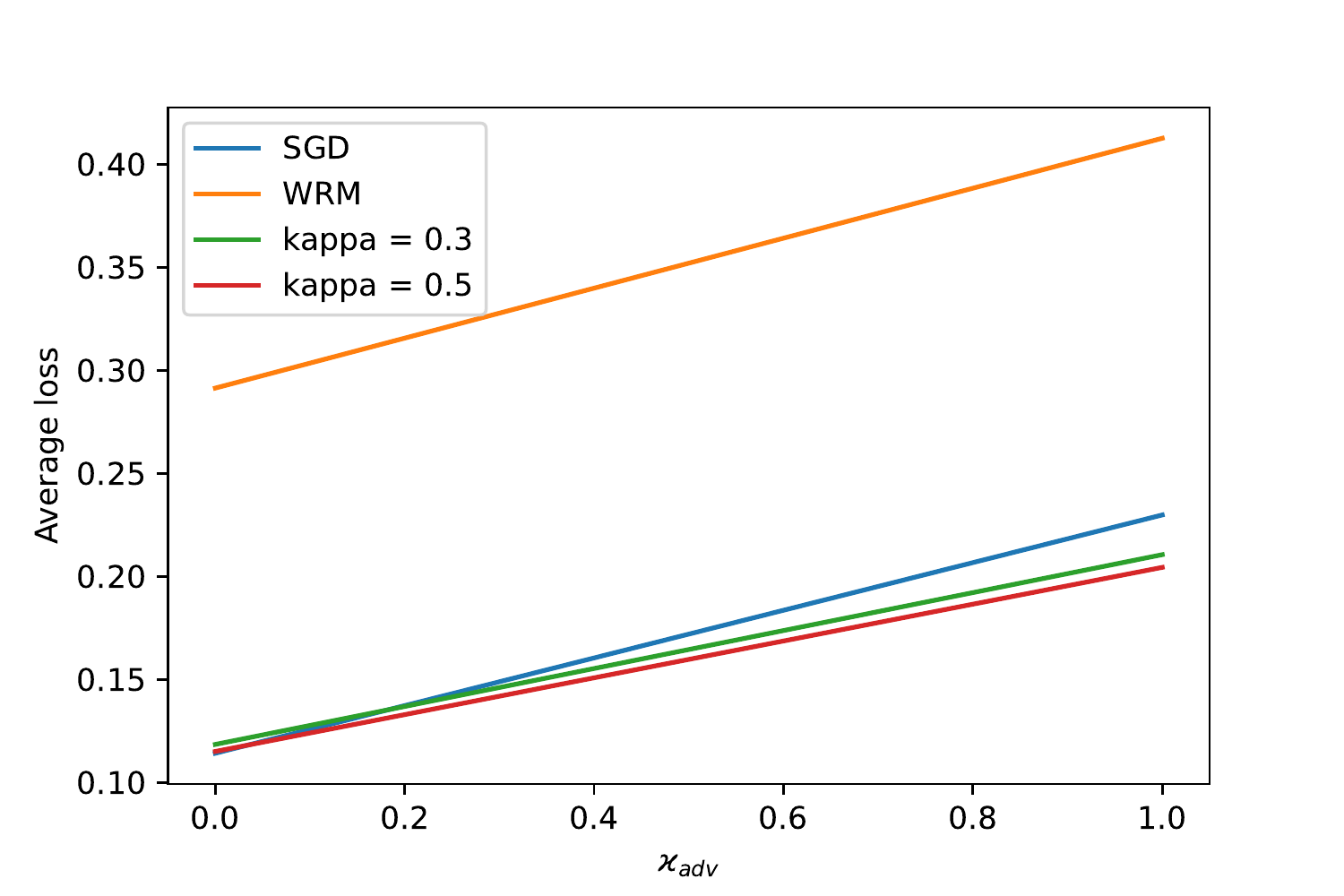}
  \caption{Average losses under different perturbation\\ levels of semi-deviation attacks}
\end{subfigure}
\begin{subfigure}{0.48\textwidth}
  \includegraphics[width=1\linewidth]{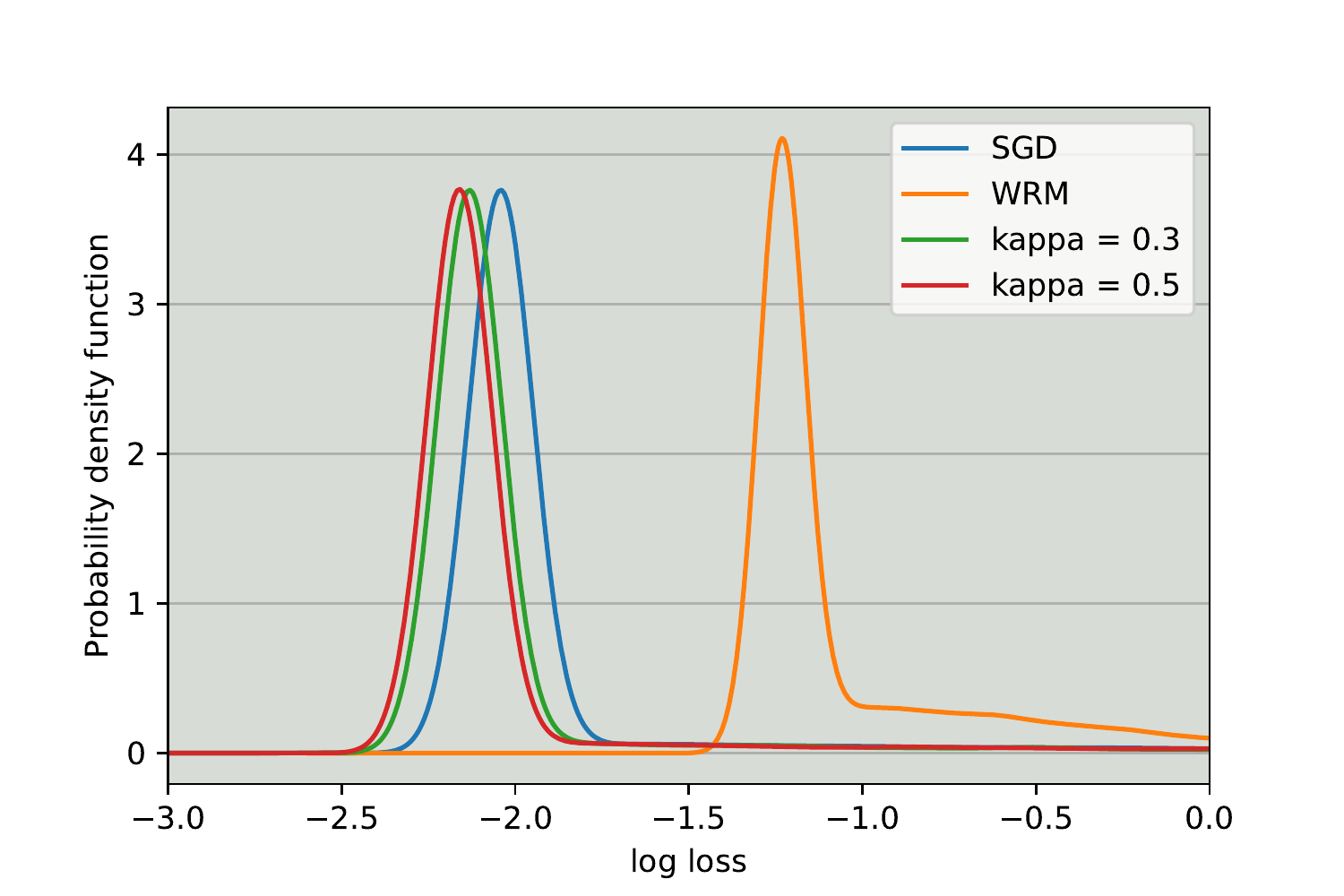}
  \caption{\mgtwo{PDF of the logarithm of the loss function subject to} semi-deviation attacks}
\end{subfigure}
\caption{Test losses under PGM attacks and semi-deviation attacks. The training data is the original (uncontaminated) MNIST data, whereas the models are tested with the contaminated data.}
\label{exp_ml}
\end{figure}

\subsection{Nonconvex penalties}

We consider a regression task on the Blog Feedback data set \citep{buza2014feedback}, containing 281 variables 
extracted from blog posts. The task is to predict the number of comments based on the other 280 (variables)
features. The instances in the years 2010 and 2011 are included in the training set (52396 in total), the instances on $02/01/2012$ are included in the validation set (114 in total), and the instances between $02/02/2012$ and $03/31/2012$ are included in the test set (7511 in total). The test set is divided into 60 subsets, each containing instances generated in one day from February or March. We use linear regression with mean absolute difference (MAD) loss as our model, plus a regularization term. The loss function has the form $\ell (x,D) = |a^Tx-b| + r(x)$ where $D=(a,b)$ is the input data, and $r(x)$ is the regularization term. For different choices of regularization terms,
the loss function can be convex or nonconvex.
Here we experiment on the Lasso penalty \citep{frank1993statistical} and two non-convex penalties: the SCAD penalty \citep{fan2001variable} and the MCP penalty \citep{zhang2010nearly}. The corresponding regularizers are:
\begin{itemize}
    \item Lasso:
    \begin{equation}
        r(x) = \lambda |x|,\label{Lasso}
    \end{equation}
    \item SCAD:
    \begin{equation}
        r(x) = \begin{cases}
        \lambda |x| & \text{if $|x| \le \lambda$,}\\
        \frac{\gamma\lambda |x| - 0.5(x^2+\lambda^2)}{\gamma-1} & \text{if $\lambda< |x| \le \lambda\gamma$,}\\
        \frac{\lambda^2(\gamma+1)}{2} & \text{if $|x| > \lambda\gamma$\,,}
        \end{cases}
    \label{SCAD}
    \end{equation}
    \item MCP:
    \begin{equation}
        r(x) = \begin{cases}
        \lambda |x| - \frac{x^2}{2\gamma} & \text{if $|x| \le \lambda\gamma$,}\\
        \frac{\lambda^2\gamma}{2} & \text{if $|x| > \lambda\gamma$,}
        \end{cases}
    \label{MCP}
    \end{equation}
\end{itemize}
\mg{where $\lambda>0$, and $\gamma>0$ are parameters.} With SCAD or MCP penalties, the loss function becomes non-smooth and weakly convex. \mgtwo{We take the constraint set to be $X = \{x : \|x\|_\infty \leq 10\}$}.


\mgtwo{In Figure \ref{exp_reg}, we compare the histories of training losses and the distributions of the logarithm of the test losses for these three different penalties together with a plot of the decay of the objective function during the training phase. }
The method exhibits similar convergence speed and test performance in (Lasso) convex and (SCAD and MCP) nonconvex cases. 
\begin{figure}[ht]
\centering
\begin{subfigure}{0.49\textwidth}
  \includegraphics[width=1\linewidth]{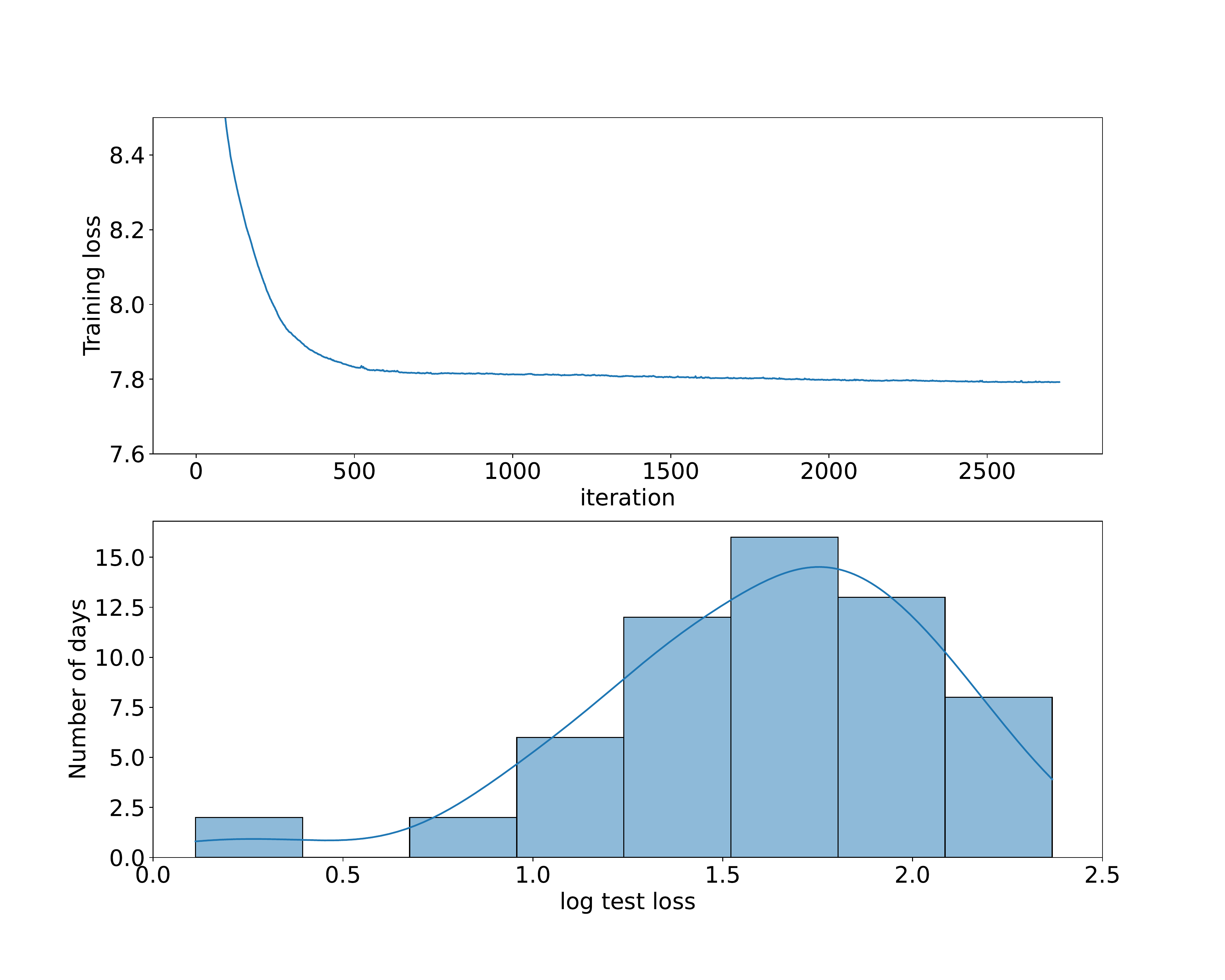}
  \caption{Lasso penalty}
\end{subfigure}
\begin{subfigure}{0.49\textwidth}
  \includegraphics[width=1\linewidth]{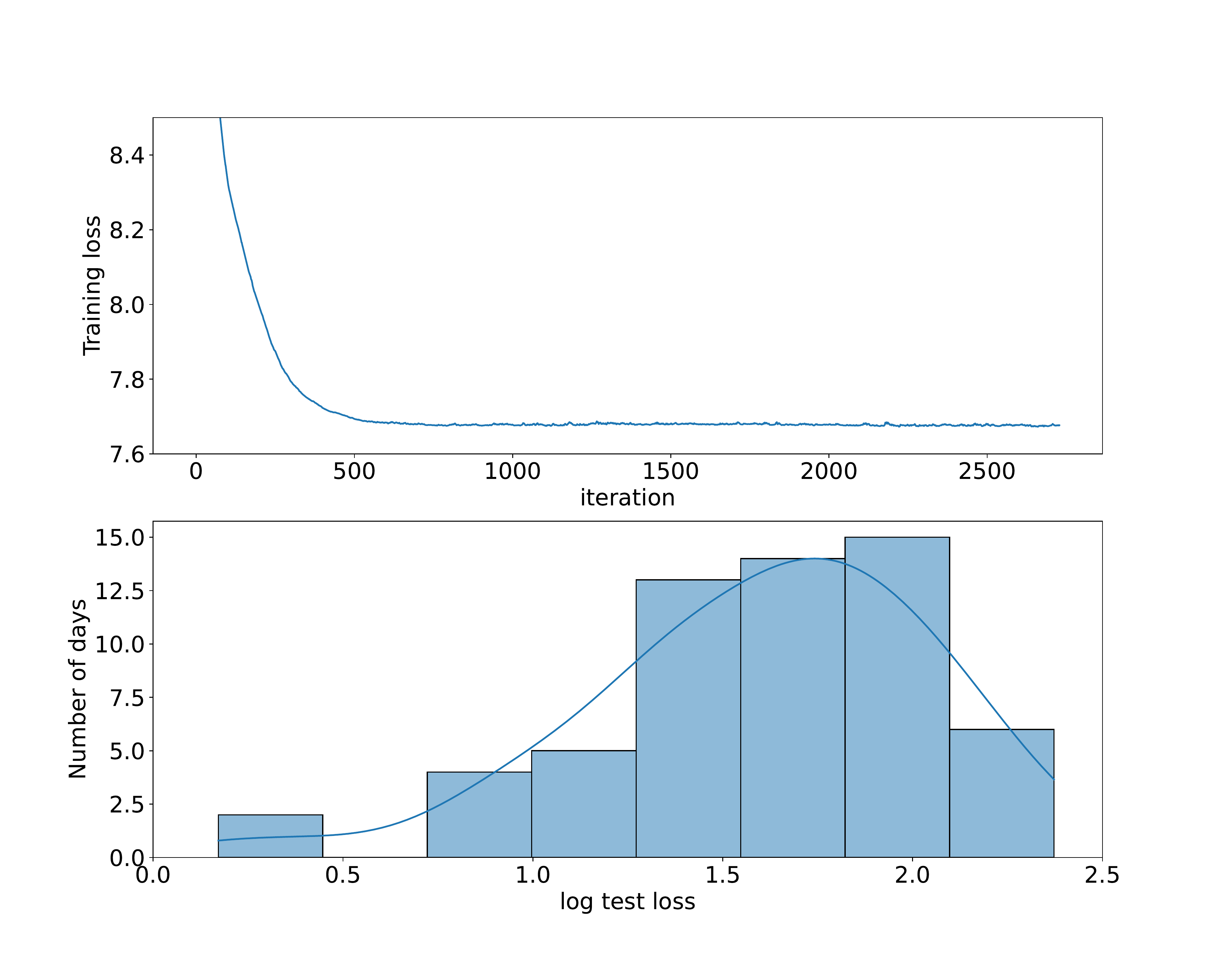}
  \caption{SCAD penalty}
\end{subfigure}
\begin{subfigure}{0.49\textwidth}
  \includegraphics[width=1\linewidth]{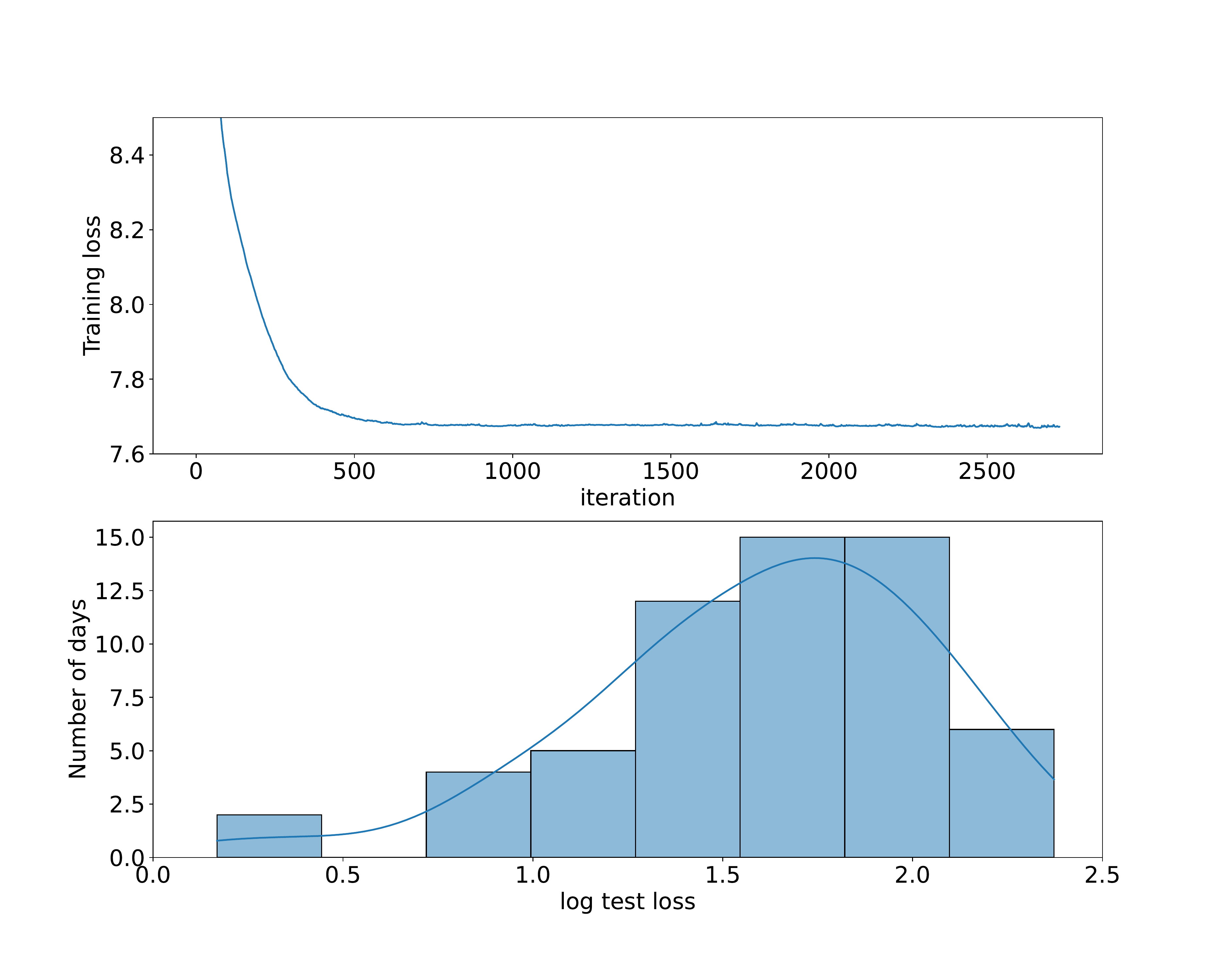}
  \caption{MCP penalty}
\end{subfigure}
\caption{For Lasso, SCAD and MCP penalties, the top image in each plot shows the training loss along iterations, and the bottom image in each plot shows the \mgtwo{logarithm of the distribution of the loss} on the test data.}
\label{exp_reg}
\end{figure}

\subsection{Remarks on the assumptions}\label{subsec-assump-hold}
For our deep learning experiment, the feasibility set $X = \{x : \|x\|_\infty \leq 10\}$ is clearly convex and compact, and Assumption \ref{as1} holds.
The input data are normalized and bounded, and the iterates stay in the compact set $X$ where the gradients of the continuously differentiable loss \mgtwo{$\ell(x,D)$} is continuous and bounded, so the loss $\ell(x,D)$ is integrable with respect to $D$ for every fixed $x\in X$. Therefore, Assumption \ref{as2} holds. 
Furthermore, we observe from \eqref{def_G} to \eqref{def_h} that the sequences $\tilde{J}^{\,k}$, $\tilde{g}^{k}$, and $\tilde{h}^{k}$ stay uniformly bounded. Therefore, their variance (conditioned on the natural filtration $\mathcal{F}_k$) is bounded, and the stochastic estimate of \mgtwo{an element from the subdifferential} $\partial F(x^k)$ is also bounded. 
If we take the expectation of these estimates, as the subdifferentials are bounded sets, we can interchange the subdifferential and the expectation operators \cite[Thm. 23.1]{mikhalevich1987nonconvex}. Since $D_1^{k}$, $D_2^k$ and $D_3^{k}$ are i.i.d. samples from the empirical data distribution, then
we can deduce that $\tilde{J}^{\,k}$, $\tilde{g}^{k}$ and $\tilde{h}^{k}$ are unbiased estimates. From these observations, we conclude that Assumption \ref{as3} is also satisfied.

For the regularized \mgtwo{logistic} regression example, similar to \citep{mei2018landscape}, the constraint set $X$ 
is an $\ell_\infty$ ball so that it is convex and compact and Assumption \ref{as1} holds. \mgtwo{All the penalty functions we considered in \eqref{Lasso}--\eqref{MCP} are weakly convex, so that the loss $\ell(x,D)$ is also weakly convex with respect to $x$}. \mgtwo{By similar arguments to those for the deep learning setting}, the loss $\ell(x,D)$ is integrable with respect to $D$ for every fixed $x\in X$, and Assumptions \ref{as3} and \ref{as4} hold. The input data are normalized and bounded, so the stochastic Lipschitz constant $\tilde{L}(D)$ is also bounded, and Assumption \ref{as5} holds. \mgtwo{Therefore, our assumptions are satisfied for the numerical experiments conducted in this work.}

\section{Conclusion}

\mgtwo{In this paper, we considered a distributionally robust stochastic optimization problem where the ambiguity set is defined with the use of the mean--semideviation risk measure. We reformulated this problem as a stochastic two-level non-smooth optimization problem and proposed a single time-scale method called Stochastic Compositional Subgradient (SCS). Our method can support two different ways of inner value tracking: (i) linearized tracking of a continuously differentiable loss function, (ii) tracking of a weakly convex \mg{loss} function through the SPIDER estimator. We show that the sample complexity of $\mathcal{O}(\varepsilon^{-3})$ is possible in both cases, with only the constant larger in the second case. To our knowledge, this is the first sample complexity result for distributionally robust learning with non-convex non-smooth losses. Finally, we illustrated the performance of our algorithm on a robust deep-learning problem and a logistic regression problem with weakly convex, non-smooth regularizers.}

\section{Acknowledgements}
This research is supported in part by the grants Office of Naval Research Award Number N00014-21-1-2244, National Science Foundation (NSF) CCF-1814888, NSF DMS-2053485, NSF DMS-1723085, the National Science Foundation Award DMS-1907522 and
by the Office of Naval Research Award N00014-21-1-2161.

\bibliography{multi}


\end{document}